\newtheorem{remark}{Remark}[section]
\newtheorem{theorem}[remark]{Theorem}
\newtheorem{corollary}[remark]{Corollary}
\newtheorem{lemma}[remark]{Lemma}
\newtheorem{definition}[remark]{Definition}
\numberwithin{equation}{section}
\newcommand{\p}{\partial} 
\title{Decoding a mean field game by the Cauchy data around its unknown stationary states}
\author[1,*]{Hongyu Liu}
\author[1,$\dagger$]{Catharine W. K. Lo}
\author[2,$\natural$]{Shen Zhang}
\affil[1]{Department of Mathematics, City University of Hong Kong}
\affil[2]{Department of Mathematics, Michigan State University}
\affil[*]{hongyu.liuip@gmail.com, hongyliu@cityu.edu.hk}
\affil[$\dagger$]{wingkclo@cityu.edu.hk}
\affil[$\natural$]{zhan2387@msu.edu}
\date{}
\begin{document}
\maketitle

\begin{abstract}
    In recent years, mean field games (MFGs) have garnered considerable attention and emerged as a dynamic and actively researched field across various domains, including economics, social sciences, finance, and transportation. The inverse design and decoding of MFGs offer valuable means to extract information from observed data and gain insights into the intricate underlying dynamics and strategies of these complex physical systems. This paper presents a novel approach to the study of inverse problems in MFGs by analyzing the Cauchy data around their unknown stationary states. This study distinguishes itself from existing inverse problem investigations in three key significant aspects: Firstly, we consider MFG problems in a highly general form. Secondly, we address the technical challenge of the probability measure constraint by utilizing Cauchy data in our inverse problem study. Thirdly, we enhance existing high order linearization methods by introducing a novel approach that involves conducting linearization around non-trivial stationary states of the MFG system, which are not a-priori known. These contributions provide new insights and offer promising avenues for studying inverse problems for MFGs. By unraveling the hidden structure of MFGs, researchers and practitioners can make informed decisions, optimize system performance, and address real-world challenges more effectively.

    \medskip
		
		\noindent{\bf Keywords.} Mean field games, inverse problems, Cauchy data, unique continuation principle, unique identifiability.
		
		\noindent{\bf Mathematics Subject Classification (2020)}: Primary 35Q89, 35R30; secondary 91A16, 35R35

\end{abstract}

\section{Introduction}

\subsection{Mathematical Setup}

Mean field games (MFGs) is a powerful mathematical framework used to analyze the behavior of large populations of interacting agents, averaged-out over the number of agents. It is a powerful tool that provides valuable insights into the analysis of complex systems involving a large number of rational decision-makers, such as crowds \cite{Achdou2021econs,MFGCrowd,MFGCrowd+Econs}, financial markets \cite{MFGCrowd+Econs,CarmonaDelarue2018_1,Lacker2019finance}, traffic flows \cite{MFGCar2,MFGAutoCar2}, or social networks \cite{MFGSocialNetwork}. MFGs have gained significant attention and have become a thriving field of research due to their ability to capture emergent phenomena arising from the interactions and strategic decision-making of a vast number of individuals.

The origins of MFG theory can be traced back to the pioneering works of Caines, Huang, and Malham\'e \cite{huang2006large} and Lasry and Lions \cite{LasryLions1, LasryLions2}, who independently developed this framework to address the challenges of modeling and analyzing such systems. Since then, MFGs have proven to be a highly effective tool in understanding equilibrium states, optimal strategies, and the emergence of global patterns in various domains. As such, MFGs have gained significant attention and have become a vibrant field of research.

At the heart of MFGs is the concept of considering the averaged-out behaviors of agents as the system approaches the limit of an infinitely large population. By doing so, MFGs provide a macroscopic perspective that captures the collective dynamics and strategic interactions of the agents, while abstracting away from individual idiosyncrasies. By focusing on averaged characteristics, this macroscopic prespective of MFGs allows for a simplified yet insightful analysis of complex systems, enabling the study of equilibrium states, the interplay of optimal strategies, and the emergence of global patterns, in a computationally tractable manner.

In a game, each individual player makes decisions based on their own optimization problem while taking into account the decisions made by other players. A key feature of MFGs is the presence of an adversarial regime, where the agents' decisions are influenced by the actions of others. In this regime, a Nash equilibrium exists, representing a state in which no agent can unilaterally improve their own outcome by deviating from their chosen strategy, and is unique within the so-called monotone regime. 

This mean field equilibrium is typically illustrated by the following MFG system:
   \begin{equation}\label{eq:MFG0}
    \begin{cases}
        -\partial_t v(x,t) -\sigma\Delta v(x,t) + \mathcal{H}(x,t,\nabla v,m) = F(x,t,v,m) &\quad \text{in }Q:=\overline{\Omega}\times[0,T],\\
        \partial_t m(x,t) -\Delta(\sigma m(x,t)) - \nabla\cdot(m\nabla_p \mathcal{H}(x,t,\nabla v,m)) = 0  &\quad \text{in }Q\\
        v(x,T)=G(x,m(x,T)),\quad m(x,0)=f(x)&\quad \text{in }\Omega.
    \end{cases}
\end{equation} 
Here, $\Omega$ is a bounded Lipschitz domain in the Euclidean space $\mathbb{R}^n$ with $n\in\mathbb{N}$, $\sigma(x,t)>0$ is a positive real function describing the diffusion process, while $\mathcal{H}$ is a nonlinear Hamiltonian with real values. This system captures the dynamics of the agents' behaviors and the evolution of the population density.

Due to the significant size of the population, it is reasonable to consider the number of players approaching infinity, so we can consider just a representative player. The value function of this representative player is given by $v(x,t)$ in \eqref{eq:MFG0}, characterized by a state variable $x$ representing their relevant characteristics in the problem at hand. These characteristics could include physical location, social status, the quantity of a particular resource, and so on. This representative player then interacts with other players through its dependence on the empirical density of agents, denoted as $m$, in the sense that this interaction is solely based on the positions of the players in the state space, without considering their individual identities. Furthermore, as the number of players becomes very large, it is reasonable to assume that $m$ becomes deterministic and is unaffected by the behavior of any individual player. Then, $m(\cdot,t)\in\mathcal{P}(\Omega)$, where $\mathcal{P}(\Omega)$ is the set of Borel probability measures on $\Omega$. In this system, $F$ is the running cost function which signifies the interaction between the agents and the population; $f$ represents the initial population distribution and $G$ signifies the terminal cost. All the functions involved are real valued.

By solving this MFG system \eqref{eq:MFG0}, researchers can characterize the equilibrium state and analyze the strategic interactions among the agents. We shall delve into the well-posedness of the MFG system in Section \ref{sec:wellpose}. At the same time, while the primary application of MFG is to understand large systems, it also offers valuable tools for studying practical problems from an inverse perspective, such as inverse design and decoding MFGs. Henceforth, we aim to determine the intrinsic elements of the system \eqref{eq:MFG0}, specifically the running cost $F$ and terminal cost $G$. To achieve this, we use Cauchy data on $\Sigma:=\partial\Omega$.  Let $$\mathcal{C}_{F,G}=\left.(v(x,t),m(x,t)\text{ and their related functions})\right|_{\Sigma\times[0,T]},$$ then the measurement map is formally given by
\begin{equation}\label{eq:MeasureFormalTime}
     \mathcal{C}_{F,G} \to F,G\\\quad\text{ for all admissible solutions }v,m\text{ to \eqref{eq:MFG0}}.
\end{equation}

 Our primary focus is on the inverse design and decoding problem of recovering $F$ and $G$ in \eqref{eq:MFGStat0}, with a particular emphasis on the unique identifiability issue:
\begin{equation}
    \mathcal{C}_{F_1,G_1}=\mathcal{C}_{F_2,G_2}\quad\text{ if and only if }\quad (F_1,G_1)\equiv(F_2,G_2),
\end{equation}
where $(F_j ,G_j)$, $j = 1, 2$, are two sets of configurations.
We do not fully solve this problem in this paper but handle it in several special cases, using two technically different methods.

The scenario in which both systems (with coefficients $(F_j,G_j)$) permit the identical solution $(v_0, m_0)$ is the first one we examine.  In other words, we assume that the system admits a stable stationary solution $(v_0, m_0)$. The exact definition of stable states will be given later in Definition \ref{stable}. Since $\Sigma=\partial\Omega$ is accessible, we can have knowledge about $(v_0, m_0)$ on $\partial\Omega$, but $v_0$ and $m_0$ are unknown in the interior of $\Omega$ since the interior of $\Omega$ is inaccessible. Then for the inverse problem, we perturb these boundary values, in the form \[v_0|_{\Sigma\times[0,T]}+\epsilon \tilde{v}, \quad m_0|_{\Sigma\times[0,T]}+\epsilon \tilde{m},\] for a small enough positive constant $\epsilon$ and $\tilde{u},\tilde{m}\in C^{2+\alpha,1+\frac{\alpha}{2}}(Q)$. Corresponding to these perturbed boundary data, the MFG system has a solution $(v_{0,\epsilon},m_{0,\epsilon})$ in $\bar{\Omega}\times[0,T]$, which remains unknown in the interior of $\Omega$, but known on $\Sigma$ that actually constitutes the measured Cauchy data: 
\[\mathcal{C}_{F,G}^{0,\epsilon}=\left.(v_{0,\epsilon}(x,t),m_{0,\epsilon}(x,t)\text{ and their related functions})\right|_{\Sigma\times[0,T]}.\]

In the practical setup, we consider to decode an MFG system when the boundary of its state domain is accessible, but the interior of the state domain is inaccessible. A much feasible strategy is to wait for the MFG system to reach its stationary state, namely $(v_0, m_0)$, and then to exert the boundary perturbations, namely $v_0|_{\Sigma\times[0,T]}+\epsilon \tilde{v}$, $m_0|_{\Sigma\times[0,T]}+\epsilon \tilde{m}$, to generate the dynamically perturbed Cauchy data on the boundary for the further process of decoding and inversion. 

The measurement map is then given by 
\[\mathcal{C}_{F,G}^{0,\epsilon}\to F,G.\]
In this case, the Cauchy data, along with information regarding the first derivative of $F$ (with respect to $m$), can be used to recover $(F,G)$, as outlined in Theorem \ref{assumeF_1}. 
Here, the solutions $(v_0,m_0)$ are fixed but unknown, and we can only measure the boundary data of their perturbed states $(v_{0,\epsilon}(x,t),m_{0,\epsilon}(x,t))$. We utilize this steady state data to determine the dynamic value functions $F$ and $G$.

It is important to note that due to the nonlinearity of the system \eqref{eq:MFG0}, the system may not have a unique solution. Therefore, the second scenario we examine is when both systems admit a time-independent solution (which may be different). Then, we can show the unique identifiability solely based on the knowledge of the Cauchy data. Additionally, we recover this time-independent solution from the Cauchy data. A crucial element in achieving this result involves examining the corresponding stationary MFG system:
\begin{equation}\label{eq:MFGStat0}
    \begin{cases}
         -\sigma\Delta v(x) + \mathcal{H}(x,\nabla v,m) + \lambda = F(x,v,m) &\quad \text{in }\Omega,\\
        -\Delta(\sigma m(x)) - \nabla\cdot(m\nabla_p \mathcal{H}(x,\nabla v,m)) = 0  &\quad \text{in }\Omega,
     \end{cases}
 \end{equation}
where $\lambda$ is a constant that can be determined through the normalizations of $v$ and $m$.
Here, $\mathcal{H}$ and $F$ are the stationary limits of the corresponding functions in \eqref{eq:MFG0}, and we denote them by the same functions for simplicity.

We observe that rather than being purely stationary, solutions of the stationary MFG system \eqref{eq:MFGStat0} can in fact be considered to be relatively stationary, in the sense that while $m$ is taken to satisfy $\partial_t m=0$, the time derivative of $u$ satisfies $\partial_t u=\lambda$ for some fixed $\lambda\in\mathbb{R}$. Such relatively stable solutions are obtained when the system reaches a relatively stable equilibrium where the behaviour of the agents does not change over time. Stationary solutions have been shown to be the limit of time-dependent solutions as the time horizon, $T$, goes to infinity \cite{MFG-stationary2,MFG-stationary}. Delving into the analysis of the stationary MFG system can thus provide valuable insights into the behavior of the time-dependent system. %Furthermore, it has been shown (see for instance \eqref{MFG-stationary2}) that solutions of the stationary system \eqref{eq:MFGStat0} do satisfy the time-dependent system \eqref{eq:MFG0}.

As such, we consider the measurement map of the boundary data of the solution to the stationary MFG system \eqref{eq:MFGStat0}. By combining these two technically different methods, we achieve a complete recovery of both the model as well as the the stationary states. 
Further details regarding these measurement maps and results will be provided in subsequent sections.

The major novelty that distinguishes our inverse problem study from most of the existing ones lies in the following three key aspects. Firstly, we consider MFG problems in a highly general form \eqref{eq:MFG0} and derive results for this form, allowing for a comprehensive analysis and obtaining results applicable to this broad class of MFGs. This generality enhances the practical relevance of our findings and widens the scope of their application. Secondly, we propose a methodology that relaxes the probability measure constraint typically associated with inverse problems in MFGs. By utilizing Cauchy data, which encompasses all possible solutions satisfying the equations \eqref{eq:MFG0} and \eqref{eq:MFGStat0}, we naturally satisfy any constraint imposed by the system. This approach increases the flexibility of the decoding process and enables consideration of a broader range of scenarios. Thirdly, we develop a novel technique by conducting linearization around the stationary states of the MFG system. We assume that these stationary states are fixed but are not known a priori. In the stationary case, we fully derive these stationary states in addition to recovering the running cost $F$, while in the time-dependent case, we recover $F$ as well as the terminal cost $G$ independent of these unknown fixed stationary states. This departure from traditional approaches allows us to address the decoding problem in situations where the exact stationary states are unknown. This technical development opens up new avenues for decoding MFGs and provides a valuable tool for practical applications. 
Further discussion on these aspects will be presented in Section \ref{sect:technical}.

\subsection{Technical Developments and Discussion }\label{sect:technical}
MFGs is a powerful tool for analyzing complex systems involving a multitude of rational decision-makers, allowing for a macroscopic understanding of emergent phenomena and strategic interactions. Since its introduction in \cite{huang2006large,LasryLions1, LasryLions2}, MFGs has served as a strongly versatile mathematical framework capable of capturing the dynamics and strategic decision-making of a vast number of individuals in various fields, including economics, social sciences, finance and transportation, and have gained substantial attention and have become a vibrant field of research.

The well-posedness of the MFG system has been comprehensively studied in various contexts and remains an active area of active research and investigation. For more details, we refer readers to \cite{ABC2017StationaryMFG,Ambrose2022existence,cardaliaguet2010notes,cardaliaguet2015weak,cardaliaguet2019master,cirant2020short,FerreiraGomesTada2019StationaryMFGDirichlet,ferreira2021existence} and the references therein. Such a problem is commonly referred to as the forward problem.

In general, we focus on the case where the function $\mathcal{H}(x,t,p,m)$ is convex with respect to the third variable $p$. This condition implies that the first equation in \eqref{eq:MFG0} corresponds to an optimal control problem. The value function $v$ associated with a typical player satisfies the Hamilton-Jacobi-Bellman (HBJ) equation, while the population density, represented by $m$, obeys the Fokker-Planck-Kolmogorov (FPK) equation, i.e. the first and second equations in \eqref{eq:MFG0}, respectively. These two equations are coupled nonlinearly in a forward-backward manner, with the second equation in $m$ evolving forward in time and the first equation in $v$ evolving backward in time.

Given the wide range of physical applications, it is both interesting and physically meaningful to study these practical problems from the inverse perspective, including the inverse design and decoding of MFGs. Decoding MFGs refers to the process of recovering the underlying dynamics and parameters of a mean-field game system from observed data. This plays a crucial role in various applications, including economics, social sciences, and engineering, where understanding the hidden structure of MFGs can provide insights into collective behaviors, optimal strategies, and system dynamics. In this work, we focus on the determination of interacting factors in the MFG system \eqref{eq:MFG0}, such as the running cost $F$ and the terminal cost $G$. These problems are known as the inverse problems for MFGs. Despite their physical significance and broad practical uses, such problems are far less studied in the literature. 
In \cite{LiuMouZhang2022InversePbMeanFieldGames,LiuZhang2022-InversePbMFG,LiuZhangMFG3,ding2023determining,LiuZhangMFG4}, MFG inverse problems were proposed and investigated with several novel unique identifiability results established, and in \cite{klibanov2023lipschitz,klibanov2023mean1,klibanov2023mean2,klibanov2023holder,liu2023stability,imanuvilov2023lipschitz1,imanuvilov2023unique,klibanov2023coefficient1,klibanov2023coefficient2,imanuvilov2023global} both uniqueness and stability results were derived in a variety of settings that required not only boundary measurements but also internal initial- and final-time measurements. In addition, we would also like to note that there are some numerical results \cite{chow2022numerical,ding2022mean,klibanov2023convexification} as well as some recent development \cite{ren2023unique,ren2024reconstructing}. Notably, most of these works in \cite{ding2023determining,LiuMouZhang2022InversePbMeanFieldGames,LiuZhang2022-InversePbMFG,LiuZhangMFG3,ren2023unique,ren2024reconstructing,LiuZhangMFG4} employ the construction of complex geometric optics (CGO) solutions as ``probing modes" to establish uniqueness results. However, the application of this method becomes significantly more challenging when considering the probability density constraint on $m$ and the Neumann boundary conditions. Several works have attempted to address this challenge, including \cite{LiuZhang2022-InversePbMFG,LiuZhangMFG3,LiuZhangMFG4,ren2024reconstructing}. In this work, we present an alternative method for treating these constraints, by bypassing them and not imposing any boundary conditions on the system \eqref{eq:MFG0}, but instead consider the set of all possible solutions $(v,m)$ to the system. This can be understood in practical terms: Given a stock market where players can trade from various locations and through various avenues, for instance through local stock exchanges or online broker accounts, we only consider those tradings that are conducted online from a single country. In this case, it is not necessary to obey the probability density constraint and Neumann boundary conditions. The same holds if we consider the investments, sales and profits of multi-national technology companies in a single country. This can also be applied to airport management, where certain airlines are based at a particular airport, but there are also many other airlines that land or take-off from that airport. In these scenarios, the probability density constraint and Neumann boundary conditions are not required.

Indeed, the constraints associated with the MFG system \eqref{eq:MFG0}, including boundary conditions, positivity and probability density constraints, present intriguing and challenging technical novelties from a mathematical perspective. Furthermore, the MFG system \eqref{eq:MFG0} involves nonlinear equations which are coupled together in a backward-forward manner. To address the nonlinearity, a robust strategy involves employing linearization techniques. One such powerful approach is the successive/high-order linearization method, which has been extensively developed for various inverse problems related to nonlinear partial differential equations (PDEs), as demonstrated in \cite{LinLiuLiuZhang2021-InversePbSemilinearParabolic-CGOSolnsSuccessiveLinearisation} and related works. Subsequently, the successive linearization technique was further refined in \cite{LiuZhang2022-InversePbMFG,LiuZhangMFG3,LiuZhangMFG4} to incorporate the probability density constraint. Among several technical advancements, a key idea in \cite{LiuZhang2022-InversePbMFG} is to linearize around uniform distributions of the form
\begin{equation}\label{eq:sl1}
m=\frac{a}{|\Omega|}+\delta_m\quad\text{ with }\quad \int_\Omega\delta_m=0\quad\text{ for some }a\in[0,1].
\end{equation}
While this linearization technique adequately addresses the probability density constraint, only the running cost $F$ and Hamiltonian $\mathcal{H}$ are recovered in \cite{LiuZhang2022-InversePbMFG,LiuZhangMFG3,LiuZhangMFG4}, and it is required that $F$ depends locally on $m$. It is worth noting that in \cite{LiuZhang2022-InversePbMFG,LiuZhangMFG3,LiuZhangMFG4}, besides depending on $x$ and $m$, $F$ can explicitly depend on $t$ and $v$ too, i.e., the running cost can take the form $F(x, t, v, m)$. Moreover, the use of the successive linearization technique intrinsically requires that $F$ needs to be analytic, an assumption which may not necessarily hold.

In this paper, we address these issues by making full use of the structure of the MFG system, and propose a three-fold novel approach that allows for linearization around any solution, including non-trivial ones that are not known a priori. This means that the linearized systems remain coupled in nature, in contrast to the case in \cite{LiuMouZhang2022InversePbMeanFieldGames} where the linearization is conducted around the trivial solution $(0,0)$ which allows decoupling. We treat this in the stationary case, and demonstrate the complete recovery of the running cost $F$ in the analytic case, deriving the stationary solution in the process. In the time-dependent case, we perform linearization around any fixed unknown constant solution, and successfully recover the higher-order Taylor coefficients of $F$ (second-order and above), as well as fully recover the terminal cost $G$. This in particular, extends the work of \cite{LiuMouZhang2022InversePbMeanFieldGames}, which is the only other work which considers the unique determination of $G$. Finally, we provide some brief results in the case where $F$ is not analytic.

The aforementioned issues are not unique to MFG inverse problems. Similar probability density constraints arise in various other coupled or non-coupled PDE systems, as exemplified in \cite{liu2023determining,li2023inverse,LiLoCAC2024,li2024inverse}. In these works, high-order variation and high-order/successive linearization schemes have also been used to study the associated inverse boundary problems. We believe that the mathematical strategies and techniques developed in this paper offer novel perspectives on inverse boundary problems in these new and intriguing contexts, with the potential to yield theoretical and practical results of great significance.

The remainder of this paper is organized as follows. In Section \ref{sect:prelim}, we present some preliminary results, provide the main setting and state the main results of the inverse problem. Section \ref{sec:wellpose} is dedicated to the study of the forward MFG system. In Section \ref{sec:linearzation_discussion}, we develop the high-order linearization method. Sections \ref{sec:StatMFGProof} and \ref{sec:TimeMFGProof} provide the proofs of the main theorems for the stationary case and the time-dependent case, respectively.

\section{Preliminaries}\label{sect:prelim}

\subsection{A preliminary result}
Before we begin, we first give a preliminary result for the uniqueness of $F$, in the most general case.  As a model, we assume $\mathcal{H},F\in C^{2+\alpha}(Q\times\mathbb{R}\times\mathbb{R})$ for some $0<\alpha<1$ and $\sigma\in C^{\infty}(\overline{\Omega})$.

Let \begin{equation}\label{eq:measure1gen} 
\mathcal{C}^1_F=\left(\left.\left(v(x,t),m(x,t)\right)\right|_{t=\{0,T\},x\in\Omega},\left.\left(v,m,\partial_\nu v, \partial_\nu (\sigma m),\nabla_p \mathcal{H}\right)\right|_{t\in(0,T),x\in\Sigma}\right).
\end{equation}
  Consider the measurement map $\mathcal{C}^1$ given by
  $ \mathcal{C}^1_F \to F. $
Then the following result holds:
\begin{theorem}
    Suppose there exists a (weak) solution $v,m$ to the MFG system 
    \begin{equation}\label{eq:MFG1}
    \begin{cases}
        -\partial_t v(x,t) -\sigma\Delta v(x,t) + \mathcal{H}(x,t,\nabla v,m) = F(x,t,v,m) &\quad \text{in }Q:=\overline{\Omega}\times[0,T],\\
        \partial_t m(x,t) -\Delta(\sigma m(x,t)) - \nabla\cdot(m\nabla_p \mathcal{H}(x,t,\nabla v,m)) = 0  &\quad \text{in }Q.
    \end{cases}
\end{equation}
Let $\mathcal{C}^1$ be the associated measurement map \eqref{eq:measure1gen}.
If $\mathcal{C}^1$ is known, then, the integral
\[\int_Q F(x,t,v,m)m\,dx\,dt\] is known for any solution $m$ solving \eqref{eq:MFG1}.

In the case that $m=m_0=c\in \mathbb{R}^+_0$, the integral \[c\int_Q F(x,t,v,c)\,dx\,dt\] is known.
\end{theorem}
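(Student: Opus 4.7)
The plan is to derive an integral identity by pairing the HJB equation in \eqref{eq:MFG1} against $m$, then exploiting the formal--adjoint relationship between the parabolic operator $-\partial_t-\sigma\Delta$ and the Fokker--Planck operator $\partial_t-\Delta(\sigma\,\cdot\,)$. First, I would multiply the HJB equation by $m$ and integrate over $Q$ to get
\[
\int_Q Fm\,dx\,dt = \int_Q \bigl[-m\partial_t v - \sigma m\Delta v + m\mathcal{H}(x,t,\nabla v,m)\bigr]\,dx\,dt.
\]
Next, applying one integration by parts (IBP) in time on $-m\partial_t v$ together with two IBPs in space (Green's identity) on $-\sigma m\Delta v$, I would transfer the derivatives off $v$ and onto $\sigma m$. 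This produces the boundary contributions $-\int_\Omega [vm]_{t=0}^{t=T}\,dx$ and $\int_0^T\int_\Sigma [v\partial_\nu(\sigma m) - \sigma m\partial_\nu v]\,dS\,dt$, each of which matches precisely a component of $\mathcal{C}^1$.

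The remaining interior contributions collapse to $\int_Q v\bigl[\partial_t m - \Delta(\sigma m)\bigr]\,dx\,dt$, which the FPK equation in \eqref{eq:MFG1} rewrites as $\int_Q v\,\nabla\cdot(m\nabla_p\mathcal{H})\,dx\,dt$. A final IBP converts this into the surface integral $\int_0^T\int_\Sigma v m\,\nabla_p\mathcal{H}\cdot\nu\,dS\,dt$, which is determined by $\mathcal{C}^1$ through the prescribed $\nabla_p\mathcal{H}$ measurement, together with an interior remainder $-\int_Q m\,\nabla v\cdot\nabla_p\mathcal{H}\,dx\,dt$. Consolidating produces an identity of the form
\[
\int_Q Fm\,dx\,dt = \mathcal{B}(\mathcal{C}^1) + \int_Q m\bigl[\mathcal{H}(x,t,\nabla v,m) - \nabla v\cdot\nabla_p\mathcal{H}(x,t,\nabla v,m)\bigr]\,dx\,dt,
\]
where $\mathcal{B}(\mathcal{C}^1)$ is the sum of the boundary integrals above, expressible purely in terms of $\mathcal{C}^1$. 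Since $\mathcal{H}$ and $\nabla_p\mathcal{H}$ are known a priori as functions of their arguments, and the interior values of $(v,m)$ are pinned down by the specific solution whose Cauchy data is $\mathcal{C}^1$, the right-hand side is determined; this gives the first claim. The special case $m\equiv c$ follows immediately by substituting $m=c$ in the identity, yielding the stated expression for $c\int_Q F(x,t,v,c)\,dx\,dt$.

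The main obstacle is the technical bookkeeping during the repeated IBPs: one has to match each boundary contribution to the exact component of $\mathcal{C}^1$ and handle the variable diffusion $\sigma$ with care (since $\partial_\nu(\sigma m)\neq\sigma\partial_\nu m$ in general), choosing the order of transfers so that only the listed normal derivatives $\partial_\nu v$ and $\partial_\nu(\sigma m)$ appear. A secondary concern is justifying the regularity required for each IBP under the $C^{2+\alpha}$ hypothesis on $\mathcal{H}, F$ together with the regularity of $(v,m)$ inherited from the well-posedness theory of Section~\ref{sec:wellpose}.
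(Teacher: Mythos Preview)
Your overall strategy---multiply the HJB equation by $m$, integrate by parts, and use the FPK equation to cancel the interior terms---is exactly the paper's approach. The difference lies in what survives after the cancellations.

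You arrive at
\[
\int_Q Fm\,dx\,dt = \mathcal{B}(\mathcal{C}^1) + \int_Q m\bigl[\mathcal{H}(x,t,\nabla v,m) - \nabla v\cdot\nabla_p\mathcal{H}(x,t,\nabla v,m)\bigr]\,dx\,dt,
\]
and then assert that the bulk residual on the right is ``determined'' because ``the interior values of $(v,m)$ are pinned down by the specific solution whose Cauchy data is $\mathcal{C}^1$''. This is a genuine gap: the measurement $\mathcal{C}^1$ records only the traces of $(v,m)$ at $t\in\{0,T\}$ and on $\Sigma\times(0,T)$, together with $\partial_\nu v$, $\partial_\nu(\sigma m)$, and $\nabla_p\mathcal{H}$ on the lateral boundary. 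Since $F$ is the unknown, you cannot solve the system to recover $\nabla v$ and $m$ in the interior of $Q$, so the integral $\int_Q m[\mathcal{H}-\nabla v\cdot\nabla_p\mathcal{H}]$ is \emph{not} computable from $\mathcal{C}^1$. Your argument therefore does not establish the stated conclusion.

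The paper's proof, by contrast, obtains a \emph{pure boundary} identity,
\[
-\int_\Omega v(T)m(T) + \int_\Omega v(0)m(0) - \int_\Gamma \sigma m\,\partial_\nu v + \int_\Gamma v\,\partial_\nu(\sigma m) + \int_\Gamma m\,\nabla_p\mathcal{H}\,\partial_\nu v = \int_Q Fm,
\]
so that every term on the left is manifestly part of $\mathcal{C}^1$. The difference with your computation occurs at the Hamiltonian step: the paper records the contribution of $\int_Q m\mathcal{H}$ directly as $-\int_Q v\,\nabla\cdot(m\nabla_p\mathcal{H}) + \int_\Gamma m\,\nabla_p\mathcal{H}\,\partial_\nu v$, so that after invoking FPK the three interior pieces $\int_Q v\,\partial_t m$, $-\int_Q v\,\Delta(\sigma m)$, and $-\int_Q v\,\nabla\cdot(m\nabla_p\mathcal{H})$ combine to $\int_Q v\cdot 0$ and vanish, leaving no bulk residual. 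You should revisit your handling of the $\int_Q m\mathcal{H}$ term and compare it with the paper's intermediate display to see why the paper gets complete cancellation while you retain the term $\int_Q m[\mathcal{H}-\nabla v\cdot\nabla_p\mathcal{H}]$.
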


\begin{proof}

Multiplying the first equation in \eqref{eq:MFG1} of $v$ by the solution $m$ of the second equation in \eqref{eq:MFG1}, and integrating over $Q$, we have
\begin{align*}
    &\int_Q v\partial_t m -\int_\Omega v(T)m(T) + \int_\Omega v(0)m(0) \\
    &-\int_Q v\Delta(\sigma m) - \int_\Gamma m\sigma\partial_\nu v + \int_\Gamma v \partial_\nu (\sigma m)\\
    &-\int_Q v\nabla\cdot(m\nabla_p \mathcal{H}(x,t,\nabla v,m)) + \int_\Gamma m\nabla_p \mathcal{H}(x,t,\nabla v,m)\partial_\nu v \\
    &&= \int_Q F(x,t,v,m) m.
\end{align*}
But $m$ satisfies its own equation, so we obtain that
\begin{multline*}
     -\int_\Omega v(T)m(T) + \int_\Omega v(0)m(0) - \int_\Gamma m\sigma\partial_\nu v + \int_\Gamma v \partial_\nu (\sigma m) + \int_\Gamma m\nabla_p \mathcal{H}(x,t,\nabla v,m)\partial_\nu v \\= \int_Q F(x,t,v,m) m.
\end{multline*}

Observe that the terms on the left hand side are all known when $\mathcal{C}^1$ is measured. Correspondingly, the right hand side is also known.

The second result follows simply by considering the measurement map $\mathcal{C}^1$ for the constant solution $m=m_0=c$.

\end{proof}

As a corollary, we have the following result:
\begin{corollary}\label{Cor:GeneralCase}
    Suppose that $F(x,t,v,m)=\alpha m^k$ is a power of $m$ for a given $k$. Then for the measurement map $\mathcal{C}^1$ associated to $m=m_0=c\in\mathbb{R}_0^+$, $F$ can be uniquely determined.
\end{corollary}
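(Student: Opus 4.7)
The plan is to invoke directly the second conclusion of the preceding theorem, which asserts that whenever $m\equiv c\in\mathbb{R}^+_0$ is a constant solution, the scalar
\[
I_c := c\int_Q F(x,t,v,c)\,dx\,dt
\]
is fully determined by the measurement map $\mathcal{C}^1$. With the structural ansatz $F(x,t,v,m)=\alpha\,m^k$ at hand and the exponent $k$ prescribed, the decoding task collapses to recovering the single scalar parameter $\alpha$ from $I_c$.

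First, I would substitute $m=c$ into the ansatz to get $F(x,t,v,c)=\alpha\,c^k$, which is independent of $(x,t,v)$. Inserting this into the formula above yields
\[
I_c \;=\; \alpha\,c^{k+1}\,T\,|\Omega|.
\]
Since $c>0$ and $T|\Omega|>0$, I can then solve in closed form
\[
\alpha \;=\; \frac{I_c}{c^{\,k+1}\,T\,|\Omega|},
\]
thereby pinning down $\alpha$, and hence $F$, from a single constant-state Cauchy measurement.

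The main subtlety, rather than any genuine obstacle, is interpretational: the reduction works because $\alpha$ is treated as a single scalar coefficient, so the spatial-temporal integration against the constant state is enough to resolve it. Were $\alpha$ permitted to depend on $(x,t)$, only its mean value $\fint_Q \alpha$ would be recoverable from one such measurement, and further input (e.g.\ varying $c$ still does not help because the dependence factors out as $c^{k+1}$) would be required. Under the natural reading that $\alpha\in\mathbb{R}$, the argument is essentially a one-line consequence of the theorem and requires nothing beyond the hypothesis $c>0$.
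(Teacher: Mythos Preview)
Your argument is correct and follows essentially the same route as the paper: substitute the ansatz into the known integral from the preceding theorem, evaluate it on the constant state $m\equiv c$, and solve for the scalar $\alpha$. Your version is in fact slightly more careful, correctly obtaining the factor $c^{k+1}$ (the paper's proof writes $c^k$, a harmless slip) and explicitly noting that the division step requires $c>0$.
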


\begin{proof}
    By the previous theorem, 
    \[\int_Q F(x,t,v,m)m\,dx\,dt=\int_Q \alpha [m(x,t)]^k\,dx\,dt=\int_Q \alpha c^k\,dx\,dt=c^k\alpha T |\Omega|\] is known. Therefore, $\alpha$ is uniquely determined, and thus $F$ is uniquely determined.
\end{proof}

\begin{remark}
    Observe that in this case, we do not impose any boundary or initial/terminal conditions on the system \eqref{eq:MFG1}, but instead measure these data.
\end{remark}

Corollary \ref{Cor:GeneralCase} strongly suggests that we will be able to obtain uniqueness results on $F$ which are more direct, if $F$ is in polynomial form and depends only on $m$. Consequently, in the remainder of this paper, we will consider $F$ and $G$ to be analytic, and generalize Corollary \ref{Cor:GeneralCase} to obtain their uniqueness results.

\subsection{Admissible class}\label{sec:admissible}
For $k\in\mathbb{N}$ and $0<\alpha<1$, we define the H\"older space $C^{k+\alpha}(\overline{\Omega})$ as the subspace of $C^{k}(\overline{\Omega})$ such that $\phi\in C^{k+\alpha}(\overline{\Omega})$ if and only if $D^l\phi$ exist and are H\"older continuous with exponent $\alpha$ for all $l=(l_1,l_2,\ldots,l_n)\in \mathbb{N}^n$ with $|l|\leq k$, where $D^l:=\partial_{x_1}^{l_1}\partial_{x_2}^{l_2}\cdots\partial_{x_n}^{l_n}$ for $x=(x_1, x_2,\ldots, x_n)$. The norm is defined as
	\begin{equation}
		\|\phi\|_{C^{k+\alpha}(\overline{\Omega}) }:=\sum_{|l|\leq k}\|D^l\phi\|_{\infty}+\sum_{|l|=k}\sup_{x\neq y}\frac{|D^l\phi(x)-D^l\phi(y)|}{|x-y|^{\alpha}}.
	\end{equation}
	If the function $\phi$ depends on both the time and space variables, we define $\phi\in C^{k+\alpha, \frac{k+\alpha}{2}}(Q)$ if $D^lD^{j}_t\phi$ exist and are H\"older continuous with exponent $\alpha$ in $x$ and $\frac{k+\alpha}{2} $ in $t$ for all  $l\in \mathbb{N}^n$, $j\in\mathbb{N}$ with $|l|+2j\leq k.$ The norm is defined as
	\begin{equation}
		\begin{aligned}
			\|\phi\|_{ C^{k+\alpha, \frac{k+\alpha}{2}}(Q)}:&=\sum_{|l|+2j\leq k}\|D^lD^j_t\phi\|_{\infty}+\sum_{|l|+2j= k}\sup_{t,x\neq y}\frac{|\phi(x,t)-\phi(y,t)|}{|x-y|^{\alpha}}\\
			&+\sum_{|l|+2j= k}\sup_{t\neq t',x} \frac{|\phi(x,t)-\phi(x,t')|}{|t-t'|^{\alpha/2}}.
		\end{aligned}
	\end{equation}
Now we introduce the admissible classes of $F$ and $G$, in a similar way as in \cite{LiuMouZhang2022InversePbMeanFieldGames}. For the completeness of this paper, we list it here.
\begin{definition}\label{AdmissClass1}
	We say that  $U(x,t,z):\mathbb{R}^n\times\mathbb{R}\times\mathbb{C}\to\mathbb{C}$ is admissible, denoted by $U\in\mathcal{A}$, if the following two conditions are satisfied:
	\begin{enumerate}[label=(\roman*)]
		\item The map $z\mapsto U(\cdot,\cdot,z)$ is holomorphic with value in $C^{2+\alpha,1+\frac{\alpha}{2}}(Q)$ for some $\alpha\in(0,1)$;
		% \item $U(x,t,m_0)=-\partial_t v_0(x,t) -\sigma\Delta v_0(x,t) + \frac{1}{2}\kappa|\nabla v_0(x,t)|^2$ for all $(x,t)\in\mathbb{R}^n\times (0,T)$ for given $(v_0,m_0)$.	
  \item $U(x,t,m_0(x,t))=0$ for all $(x,t)\in\mathbb{R}^n\times (0,T)$ for given $m_0(x,t)$.	
		\end{enumerate}
		
		Clearly, if the conditions (i) and (ii) are fulfilled, $U$ can be expanded into a power series as follows:
		\begin{equation}\label{eq:FTime}
			U(x,t,z)=\sum_{k=1}^{\infty} U^{(k)}(x,t)\frac{(z-m_0)^k}{k!},
		\end{equation}
		where $ U^{(k)}(x,t)=\frac{\p^k U}{\p z^k}(x,t,m_0)\in C^{2+\alpha,1+\frac{\alpha}{2}}(Q).$
\end{definition}

	\begin{definition}\label{AdmissClass2}
		We say that $V(x,z):\mathbb{R}^n\times\mathbb{C}\to\mathbb{C}$ is admissible, denoted by $V\in\mathcal{B}$, if it satisfies the following two conditions:
		\begin{enumerate}[label=(\roman*)]
			\item The map $z\mapsto V(\cdot,z)$ is holomorphic with value in $C^{2+\alpha}(\mathbb{R}^n)$ for some $\alpha\in(0,1)$;
			\item $V(x,m_0(x))=0$ for all $x\in\mathbb{R}^n$ and given $m_0(x)$ depending only on $x$.
		\end{enumerate}

		Clearly, if the conditions (i) and (ii) are fulfilled, $U$ can be expanded into a power series as follows:
		\begin{equation}\label{eq:G}
			V(x,z)=\sum_{k=1}^{\infty} V^{(k)}(x)\frac{(z-m_0)^k}{k!},
		\end{equation}
		where $ V^{(k)}(x)=\frac{\p^kU}{\p z^k}(x,m_0)\in C^{2+\alpha}(\mathbb{R}^n).$
	\end{definition}

%  \begin{definition}\label{AdmissClass3}
% We say $U(x,z):\mathbb{R}^n\times\mathbb{C}\to\mathbb{C}$ is admissible, denoted by $U\in\mathcal{C}$, if it satisfies the following conditions:
% 	\begin{enumerate}[label=(\roman*)]
% 			\item The map $z\mapsto U(\cdot,z)$ is holomorphic with value in $C^{\alpha}(\mathbb{R}^n)$ for some $\alpha\in(0,1)$.
% 			\item $U(x,m_0)=-\Delta v_0(x,t) + \frac{1}{2}|\nabla v_0(x,t)|^2$ for all $x\in\mathbb{R}^n$. 
% 		\end{enumerate} 	
% 		Clearly, if (1) and (2) are fulfilled, then $U$ can be expanded into a power series as follows:
% 		\begin{equation}\label{eq:FStat}
% 			U(x,z)=\sum_{k=0}^{\infty} U^{(k)}(x)\frac{(z-m_0)^k}{k!},
% 		\end{equation}
% 		where $ U^{(k)}(x)=\frac{\p^kU}{\p z^k}(x,m_0)\in C^{\alpha}(\mathbb{R}^n).$
% \end{definition}

The admissibility conditions in Definitions~\ref{AdmissClass1} and \ref{AdmissClass2} shall be imposed as a-priori conditions on the unknowns $F$ and $G$ in what follows for our inverse problem study.

Throughout the rest of the paper, we assume that the Hamiltonian in \eqref{eq:MFG0} is of the following form:
\begin{equation}\label{eq:ham1}
\mathcal{H}(x,p)=\frac 1 2\kappa(x,t)|p|^2, \quad p=\nabla v(x, t), 
\end{equation}
which signifies that the Lagrangian energy of the MFG system is kinetic (cf. \cite{LiuMouZhang2022InversePbMeanFieldGames}). This type of Hamiltonian widely occurs in the MFG theory (cf. \cite{ding2022mean} ).

\subsection{Main unique identifiability results}
With these definitions in hand, we are able to articulate the primary conclusions for the inverse problems, which show that
one can fully recover the running cost and terminal cost, making use of different measurement maps in different situations.

Let us first consider the stationary MFG system for 
$\sigma\equiv\kappa\equiv1$:
\begin{equation}\label{eq:zs_main}
	\begin{cases}
		-\Delta v(x)+\frac{1}{2}|\nabla v(x)|^2=F(x,m)  &  \text{ in } \Omega\\
		-\Delta (m(x))-\nabla\cdot(m(x)\nabla v(x))=0  & \text{ in } \Omega\\
		v(x)=f(x),\quad m(x)=g(x)  & \text{ in } \Sigma.
	\end{cases}
\end{equation}

We define the Cauchy set $\mathcal{C}_F^2$ of \eqref{eq:zs_main} by
\begin{equation}\label{Cauchy_set} 
	\mathcal{C}_F^2:=\{m\big|_{\Sigma},\nabla m\cdot\nu:(v,m) \text{ satisfies equation } \eqref{eq:zs_main} \},
\end{equation}
where $\nu$ is the unit outer normal of $\Omega.$
% Assume $F(x,m)=F^{(0)}(x)+\sum_{k=1}^{\infty}\dfrac{1}{k!}F^{(k)}(x)(m-m_{0})^k$
% and $F^{(0)}(x)=0$.

Then we are able to fully recover the running costs $F$ from Cauchy data, and the following uniqueness result holds:
\begin{theorem}\label{zs_main_result}
	Let $n\geq3$ and $F_i(x,m)\in \mathcal{B}$ $ (i=1,2)$ such that $F_i(x,m_{0,i})=0$, where $m_{0,i}$ are solutions of system \eqref{eq:zs_main} with $F(x,m)=F_i(x,m)$ respectively. Assume that $\mathcal{C}^2_{F_1}=\mathcal{C}^2_{F_2}$, then $m_{0,1}=m_{0,2}$ and $F_1=F_2.$
\end{theorem}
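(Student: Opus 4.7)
My strategy combines high-order linearization at the (a priori unknown) stationary state $(v_{0,i},m_{0,i})$ with a unique continuation argument. The proof naturally splits into two stages: first identifying the stationary state from the Cauchy data, and then recovering $F$ by successive linearization at that state.

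A key structural observation driving the first stage is that the admissibility condition $F_i(x,m_{0,i})=0$ causes the HJB equation for $v_{0,i}$ to collapse to the $F_i$-independent quadratic equation $-\Delta v_{0,i}+\tfrac12|\nabla v_{0,i}|^2=0$, while $m_{0,i}$ satisfies the divergence-form Fokker--Planck equation $-\Delta m_{0,i}-\nabla\!\cdot\!(m_{0,i}\nabla v_{0,i})=0$. Thus both stationary pairs solve the same reduced system independently of $F_i$. Since $(v_{0,i},m_{0,i})$ is itself a solution of \eqref{eq:zs_main}, its Cauchy data lies in $\mathcal{C}^2_{F_i}$, so the hypothesis $\mathcal{C}^2_{F_1}=\mathcal{C}^2_{F_2}$ produces a solution of the $F_2$-system whose $m$-Cauchy data on $\Sigma$ agrees with that of $(v_{0,1},m_{0,1})$. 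Writing the coupled difference equations for $w=v^\ast-v_{0,1}$ and $\mu=m^\ast-m_{0,1}$ and invoking a careful unique continuation argument---using that the divergence-form equation for $\mu$ admits unique continuation from vanishing Cauchy data, combined with the quadratic HJB equation controlling $w$ through its gradient---I conclude $\mu\equiv 0$ in $\Omega$, hence $m_{0,1}=m_{0,2}$.

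Denoting the common stationary pair by $(v_0,m_0)$, I then perform successive linearization. Expanding $v=v_0+\sum_{k\ge 1}\epsilon^k v_{(k)}/k!$ and $m=m_0+\sum_{k\ge 1}\epsilon^k m_{(k)}/k!$ around a one-parameter family of boundary perturbations, the first-order correction satisfies the coupled linear system
\[
-\Delta v_{(1)}+\nabla v_0\cdot\nabla v_{(1)}=F^{(1)}(x)\,m_{(1)},\qquad -\Delta m_{(1)}-\nabla\!\cdot\!(m_{(1)}\nabla v_0)-\nabla\!\cdot\!(m_0\nabla v_{(1)})=0.
\]
Pairing the difference of the linearized $F_1$- and $F_2$-systems against a sufficiently rich class of auxiliary solutions and integrating by parts produces an integral identity of the form $\int_\Omega (F_1^{(1)}-F_2^{(1)})\,m_{(1)}\,\varphi\,dx=0$, which forces $F_1^{(1)}=F_2^{(1)}$. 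The dimensional restriction $n\ge 3$ enters here through a CGO/Runge-approximation construction adapted to the coupled linear system. Iterating this scheme at each order $k$ inductively yields $F_1^{(k)}=F_2^{(k)}$ for every $k\ge 1$, and the holomorphicity of $F_i\in\mathcal{B}$ in $z$ then upgrades termwise equality of the Taylor expansions at $m_0$ to $F_1\equiv F_2$.

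The principal obstacle is the first stage. Unlike in classical linearization inverse problems, the base point of the linearization is itself an unknown determined implicitly by the nonlinear PDE, so the unique continuation argument must operate on the full coupled nonlinear MFG system rather than on a single linear equation; the forward-backward coupling between the HJB and Fokker--Planck components, together with the nonlinearity of the HJB equation, both require delicate handling, particularly given that only the $m$-component of the Cauchy data is directly recorded in $\mathcal{C}^2_F$. Once the common stationary state is pinned down, the successive-linearization stage follows a coupled-system variant of a now fairly standard template, with the CGO construction providing the technical backbone for extracting the Taylor coefficients one at a time.
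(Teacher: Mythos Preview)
Your overall architecture---pin down the stationary state first, then run successive linearization with CGO solutions---matches the paper's. But two key ideas are missing, and without them the argument does not close.

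\textbf{Stage 1.} The paper does \emph{not} use the Cauchy-data hypothesis to identify the stationary state, and your proposed route via unique continuation for the full nonlinear coupled system is, as sketched, incomplete. The paper instead fixes $(v_{0,i},m_{0,i})$ structurally: it is chosen to satisfy the Gibbs relation $m_{0,i}=e^{-v_{0,i}}/\int_\Omega e^{-v_{0,i}}$ together with homogeneous Neumann data $\partial_\nu v_{0,i}=\partial_\nu m_{0,i}=0$ on $\Sigma$. Since $F_i(x,m_{0,i})=0$, the HJB equation reduces to $-\Delta v_{0,i}+\tfrac12|\nabla v_{0,i}|^2=0$ with $\partial_\nu v_{0,i}=0$, which (via Hopf--Cole) forces $\nabla v_{0,1}=\nabla v_{0,2}$, and then $m_{0,1}=m_{0,2}$ follows from the Gibbs formula. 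No Cauchy data and no nonlinear unique continuation is used. In your scheme, by contrast, $\mathcal{C}^2_{F_1}=\mathcal{C}^2_{F_2}$ only produces an $F_2$-solution $(v^\ast,m^\ast)$ whose $m$-Cauchy data matches that of $m_{0,1}$; but $(v^\ast,m^\ast)$ need not be $(v_{0,2},m_{0,2})$, the difference equations for $(v^\ast-v_{0,1},m^\ast-m_{0,1})$ carry the unknown term $F_2(x,m^\ast)$, and---most seriously---$\mathcal{C}^2_F$ records only $m$-data, so you have no Cauchy data for $w=v^\ast-v_{0,1}$ with which to run unique continuation on the coupled pair.

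\textbf{Stage 2.} The decisive technical step you are missing is a \emph{decoupling} of the first-order linearized system. The Gibbs relation gives $\nabla m_0=-m_0\nabla v_0$; substituting the linearized HJB into the linearized Fokker--Planck, this identity annihilates the cross-term $(m_0\nabla v_0+\nabla m_0)\cdot\nabla v^{(1)}$ and yields a \emph{single scalar} equation for $m^{(1)}$ alone:
\[
\Delta m^{(1)}+\nabla v_0\cdot\nabla m^{(1)}+\bigl(\Delta v_0-F^{(1)}m_0\bigr)m^{(1)}=0.
\]
This is what makes the CGO machinery work: one constructs CGO solutions $e^{\xi\cdot x\mp\tfrac12 v_0}(1+\omega)$ for this scalar equation and for an adjoint scalar equation, pairs them through an integration-by-parts identity, and obtains $\int_\Omega (F_1^{(1)}-F_2^{(1)})m_0\,e^{ik\cdot x}\,dx=0$ for all $k$. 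Your plan to ``pair against auxiliary solutions of the coupled linear system'' skips exactly this reduction; without it, producing a dense family of products of solutions for the genuinely coupled linearized MFG system is not a routine step but the whole difficulty.
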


\begin{remark}\label{zeroCond}
    For $F\in\mathcal{B}$, $F^{(0)}(x)=0$ is a necessary condition in order to have uniqueness of $F(x,m)$ from the knowledge of $\mathcal{C}_F$ (cf. the discussion in \cite{LiuMouZhang2022InversePbMeanFieldGames}). Otherwise, we may consider the following counter-example in 1-dimension: $F_1(x,m)=x,F_2(x,m)=x^2$. Then $v_i$ is independent of $\mathcal{C}_{F_i}$, and $\mathcal{C}_{F_1}=\mathcal{C}_{F_2}$. This is a relatively well-known result in inverse boundary problems, that one cannot recover the ``source term" from the knowledge of the Cauchy data.
\end{remark}

Next, we return to the time-dependent case, given by the quadratic MFG system 
\begin{equation}\label{eq:MFG2}
    \begin{cases}
        -\partial_t v(x,t) -\sigma(x,t)\Delta v(x,t) + \frac{1}{2}\kappa(x,t)|\nabla v(x,t)|^2 = F(x,t,m) &\quad \text{in }Q,\\
        \partial_t m(x,t) -\Delta(\sigma(x,t) m(x,t)) - \nabla\cdot(\kappa(x,t)m(x,t)\nabla v(x,t)) = 0  &\quad \text{in }Q,\\
        v(x,T)=G(x,m(x,T)),\quad m(x,0)=f(x) &\quad \text{in }\Omega,
    \end{cases}
\end{equation} 
for bounded positive real functions $\sigma(x,t),\kappa(x,t)>0$, $\sigma(x)\in C^{2,2}(\bar{Q})$, $\kappa(x,t)\in C^{1,0}(\bar{Q})$.

Consider the measurement map 
$\mathcal{C}^3_{F,G}$ given by \[\mathcal{C}^3_{F,G}=\left.\left(v,\nabla v,m,\nabla m\right)\right|_{\Sigma\times(0,T)}\to F,G \]
associated to \eqref{eq:MFG2}. 
First, we define
\begin{definition}\label{stable}
Let $F\in\mathcal{A}$, $G\in\mathcal{B}$. Let $g,h\in C^{2+\alpha,1+\frac{\alpha}{2}}(\Sigma\times(0,T))$ satisfy the following  compatibility conditions:
\begin{equation}\label{compatibility conditions }
\begin{cases}
     g(x,T)=G^{(1)}(x)h(x,T)&\quad \text{ in }\Sigma\\
      -\partial_t g -\sigma \Delta g + \kappa\nabla v_0\cdot \nabla g=F^{(1)}(x,t)h &\quad \text{ in }\Sigma\\
     h(x,0)=0&\quad \text{ in }\Sigma\\
     \partial_t h -\Delta (\sigma h) - \nabla\cdot(\kappa v_0\nabla h) -\nabla \cdot (\kappa h\nabla v_0)= 0 &\quad \text{ in }\Sigma\\
\end{cases}
\end{equation}

    A solution $(v_0,m_0)$ is called a stable solution of \eqref{eq:MFG2} if for any
    $g,h$ that satisfy the compatibility conditions, the system
  \begin{equation}\label{MFG2:stable}
    \begin{cases}
        -\partial_t v^{(1)}(x,t) -\sigma \Delta v^{(1)}(x,t) + \kappa\nabla v_0\cdot \nabla v^{(1)}(x,t)= F^{(1)}(x,t)m^{(1)}(x,t) &\quad \text{in }Q,\\
        \partial_t m^{(1)}(x,t) -\Delta (\sigma m^{(1)}(x,t)) - \nabla\cdot(\kappa m_0\nabla v^{(1)}) -\nabla \cdot (\kappa m^{(1)}\nabla v_0)= 0  &\quad \text{in }Q,\\
        v^{(1)}(x,t)=g,\quad m^{(1)}(x,t)=h&\quad \text{in }\Sigma\times(0,T),\\
        v^{(1)}(x,T)=G^{(1)}(x)m^{(1)}(x,T), \quad m^{(1)}(x,0) = 0  &\quad \text{in }\Omega.
    \end{cases}
\end{equation}
admits a unique solution $(v,m)\in [ C^{2+\alpha,1+\frac{\alpha}{2}}(Q)]^2.$
\end{definition}

    Then, we are able to reconstruct the running costs and terminal costs, through passive measurements for general analytic cost functions, and the following uniqueness result holds:

\begin{theorem}\label{assumeF_1}
    For $i=1,2$, let $F_i\in\mathcal{A}$, $G_i\in\mathcal{B}$ ($i=1,2$) such that $(v_0,m_0)\in [C^{2+\alpha,1+\frac{\alpha}{2}}(Q)]^2$ is a stable solution of the following system 
    \begin{equation}\label{eq:MFG2i}
    \begin{cases}
        -\partial_t v_i(x,t) -\sigma(x,t)\Delta v_i(x,t) + \frac{1}{2}\kappa(x,t)|\nabla v_i(x,t)|^2 = F_i(x,t,m_i) &\quad \text{in }Q,\\
        \partial_t m_i(x,t) -\Delta(\sigma(x,t)m_i(x,t)) - \nabla\cdot(\kappa(x,t)m_i(x,t)\nabla v_i(x,t)) = 0  &\quad \text{in }Q,\\
        v_i(x,T)=G_i(x,m_i(x,T)),\quad m_i(x,0)=f_i(x) &\quad \text{in }\Omega,
    \end{cases}
    \end{equation}
    Let $\mathcal{C}^3_{F_i,G_i}$ be the associated passive measurements. Suppose \[\mathcal{C}^3_{F_1,G_1}=\mathcal{C}^3_{F_2,G_2},\] and $F^{(1)}_1(x,t)=F^{(1)}_2(x,t)$. Then, one has \[F_1=F_2\quad \text{ and }\quad G_1=G_2.\]
\end{theorem}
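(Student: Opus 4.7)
The strategy is to perform a high-order linearization of \eqref{eq:MFG2i} around the common stable solution $(v_0, m_0)$ and argue inductively on the order $N \geq 1$ that $F^{(N)}_1 = F^{(N)}_2$ in $Q$ and $G^{(N)}_1 = G^{(N)}_2$ on $\Omega$. Since $F_i \in \mathcal{A}$ and $G_i \in \mathcal{B}$ are analytic in $m$ around $m_0$, this yields $F_1 \equiv F_2$ and $G_1 \equiv G_2$. Concretely, I would introduce a boundary perturbation $v_i|_{\Sigma\times(0,T)} = v_0|_{\Sigma\times(0,T)} + \varepsilon g$, $m_i|_{\Sigma\times(0,T)} = m_0|_{\Sigma\times(0,T)} + \varepsilon h$ with $(g,h)$ obeying the compatibility conditions of Definition~\ref{stable}; by stability, for small $\varepsilon$ the perturbed system admits a unique solution $(v_{i,\varepsilon}, m_{i,\varepsilon})$ with a convergent Taylor expansion in $\varepsilon$, and the hypothesis $\mathcal{C}^3_{F_1,G_1} = \mathcal{C}^3_{F_2,G_2}$ propagates to matching lateral Cauchy data $(v^{(N)}_i, \nabla v^{(N)}_i, m^{(N)}_i, \nabla m^{(N)}_i)|_{\Sigma\times(0,T)}$ at every order.

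\textbf{Base case.} Using $F^{(1)}_1 = F^{(1)}_2 =: F^{(1)}$, the difference $(w, n) := (v^{(1)}_1 - v^{(1)}_2, m^{(1)}_1 - m^{(1)}_2)$ satisfies the homogeneous linear system \eqref{MFG2:stable} with zero lateral Cauchy data, $n(\cdot,0)=0$, and terminal condition
\begin{equation*}
w(x,T) = G^{(1)}_1(x)\,n(x,T) + (G^{(1)}_1 - G^{(1)}_2)(x)\,m^{(1)}_2(x,T).
\end{equation*}
A unique continuation argument for the coupled HJB--FPK pair, via Carleman-type estimates adapted to the forward-backward parabolic structure, then forces $w \equiv n \equiv 0$ in $Q$, and evaluating the terminal condition yields $(G^{(1)}_1 - G^{(1)}_2)(x)\,m^{(1)}_2(x,T) = 0$. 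Varying the boundary datum $h$, the traces $\{m^{(1)}_2(\cdot, T)\}$ form a rich enough family to force $G^{(1)}_1 = G^{(1)}_2$ pointwise on $\Omega$.

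\textbf{Induction step.} Assuming $F^{(k)}_1 = F^{(k)}_2$ and $G^{(k)}_1 = G^{(k)}_2$ for all $k < N$, a Fa\`a di Bruno computation of $\partial_\varepsilon^N$ applied to \eqref{eq:MFG2i}, combined with the admissible expansions of $F_i$ and $G_i$, shows that $(w^{(N)}, n^{(N)}) := (v^{(N)}_1 - v^{(N)}_2, m^{(N)}_1 - m^{(N)}_2)$ obeys the first-order linearized system \eqref{MFG2:stable} with vanishing lateral Cauchy data and $n^{(N)}(\cdot,0)=0$, modified by an extra bulk source $\tfrac{1}{N!}(F^{(N)}_1 - F^{(N)}_2)(m^{(1)})^N$ in the HJB equation and an extra terminal contribution $\tfrac{1}{N!}(G^{(N)}_1 - G^{(N)}_2)(x)(m^{(1)}(x,T))^N$ in $w^{(N)}(x,T)$. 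Testing this difference against a solution $(\phi, \psi)$ of the corresponding adjoint linearized system and integrating by parts, all boundary integrals vanish by the Cauchy data hypothesis, producing the identity
\begin{equation*}
\int_Q (F^{(N)}_1 - F^{(N)}_2)\,(m^{(1)})^N\,\psi\,dx\,dt + \int_\Omega (G^{(N)}_1 - G^{(N)}_2)\,(m^{(1)}(x,T))^N\,\phi(x,T)\,dx = 0,
\end{equation*}
valid for every admissible probing triple $(m^{(1)}, \phi, \psi)$ generated by varying $(g,h)$ together with the adjoint boundary data. A density argument that separates the bulk and terminal contributions, by localizing $\phi$ either supported away from $t=T$ or concentrated near $t=T$, then yields $F^{(N)}_1 = F^{(N)}_2$ in $Q$ and $G^{(N)}_1 = G^{(N)}_2$ on $\Omega$, closing the induction.

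\textbf{Main obstacle.} The principal difficulty is to handle the coupled forward-backward structure of \eqref{MFG2:stable} around the \emph{unknown} non-trivial stationary profile $(v_0, m_0)$: one needs both (i) a unique continuation principle from matching lateral Cauchy data for the coupled HJB--FPK pair, adapting Carleman estimates so that the backward-in-time HJB and forward-in-time FPK components are treated simultaneously with variable, only-$C^{2+\alpha,1+\alpha/2}$ coefficients depending on $v_0$ and $m_0$; and (ii) a density/approximate-controllability result ensuring that the products $(m^{(1)})^N\psi$ and $(m^{(1)}(\cdot,T))^N\phi(\cdot,T)$ span dense subspaces of $L^2(Q)$ and $L^2(\Omega)$ respectively. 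This is the main departure from the setting of \cite{LiuMouZhang2022InversePbMeanFieldGames}, where linearization around the trivial state $(0,0)$ decouples the system and permits classical CGO constructions.
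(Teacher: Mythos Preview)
Your base case is essentially the paper's: both use a unique continuation principle (UCP) for the coupled linearized HJB--FPK pair with vanishing lateral Cauchy data to force $w\equiv n\equiv 0$, then read off $G^{(1)}_1=G^{(1)}_2$ from the terminal relation.

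The divergence is at the induction step. You switch to an adjoint/duality identity and then need density of the families $(m^{(1)})^N\psi$ in $L^2(Q)$ and $(m^{(1)}(\cdot,T))^N\phi(\cdot,T)$ in $L^2(\Omega)$, which you correctly flag as the main obstacle and do not resolve. The paper never introduces an adjoint system and never needs any density/completeness statement. Instead, it simply re-applies the \emph{same} UCP at every order. The point is that the paper's UCP (Theorem~\ref{UCP}) is stated for two solution pairs $(v_i,m_i)$ of the linear system with \emph{possibly different} inhomogeneities $E_i$: matching lateral Cauchy data already forces $v_1=v_2$, $m_1=m_2$ in $Q$. At order $N$ the paper takes $E_i=F^{(N)}_i\cdot(\text{product of first-order }m^{(j)})$, applies UCP to get $w^{(N)}=n^{(N)}\equiv 0$, and then substitutes back into the HJB equation to obtain $F^{(N)}_1=F^{(N)}_2$ and into the terminal condition to obtain $G^{(N)}_1=G^{(N)}_2$. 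The only ``choice'' needed is to pick first-order linearizations $m^{(j)}$ that do not vanish identically, which is trivial.

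So the density obstacle you identify is real for your route but is entirely sidestepped by the paper's approach: once you have a UCP of the strength of Theorem~\ref{UCP}, the higher orders are handled exactly like the base case, with no integral identity, no adjoint, and no completeness argument. A minor side remark: the paper uses a multi-parameter linearization $(\varepsilon_1,\dots,\varepsilon_N)$, so the order-$N$ source involves the product $m^{(1)}\cdots m^{(N)}$ rather than $(m^{(1)})^N$; this is cosmetic here but gives slightly more flexibility in choosing non-vanishing factors.
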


Here $(v_i,m_i)$ are two perturbations of the time-independent solution $(v_\infty,c)$. As discussed in the introduction, unlike the stationary case, we consider the same unknown time-independent solution $(v_\infty,c)$ and measure the Cauchy data of its different perturbations in the time-dependent case. Yet, this time-independent solution $(v_\infty,c)$ remains unknown, and $F$ has also not been fully recovered. However, in the case where $\sigma=\kappa=1$ and $F=F(x,m)$ is independent of time, we can apply the results of the stationary case in Theorem \ref{zs_main_result} to fully recover $F$.

\begin{corollary}
For $i=1,2$, $n\geq 3$, let $F_i\in\mathcal{A}$, $G_i\in\mathcal{B}$ ($i=1,2$) such that $(v_0,m_0)\in [C^{2+\alpha,1+\frac{\alpha}{2}}(Q)]^2$ is a stable solution of the following system:
   \begin{equation}
    \begin{cases}
        -\partial_t v(x,t) -\Delta v(x,t) + \frac{1}{2}|\nabla v(x,t)|^2 = F_i(x,m) &\quad \text{in }Q,\\
        \partial_t m(x,t) -\Delta( m(x,t)) - \nabla\cdot(m(x,t)\nabla v(x,t)) = 0  &\quad \text{in }Q,\\
        v(x,T)=G_i(x,m(x,T)),\quad m(x,0)=f_i(x) &\quad \text{in }\Omega,
    \end{cases}
\end{equation}
 Let $\mathcal{C}^3_{F_i,G_i}$ be the associated passive measurements. Suppose \[\mathcal{C}^3_{F_1,G_1}=\mathcal{C}^3_{F_2,G_2},\] 
     Then, one has \[F_1=F_2\quad \text{ and }\quad G_1=G_2.\]
\end{corollary}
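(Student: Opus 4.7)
The strategy is a two-step reduction that bootstraps Theorem~\ref{zs_main_result} into the hypothesis required by Theorem~\ref{assumeF_1}. Theorem~\ref{assumeF_1} already delivers both $F_1=F_2$ and $G_1=G_2$ provided one knows $F_1^{(1)}=F_2^{(1)}$, which is the only missing ingredient here since the corollary does not assume it. The extra structural data of the corollary, namely $\sigma=\kappa=1$ and $F_i(x,m)$ independent of $t$, will be exploited exactly to produce this missing hypothesis, by reducing the time-dependent Cauchy data $\mathcal{C}^3_{F_i,G_i}$ to stationary Cauchy data $\mathcal{C}^2_{F_i}$ and invoking Theorem~\ref{zs_main_result}.

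Step 1: the reduction from $\mathcal{C}^3$ to $\mathcal{C}^2$. Because $\sigma=\kappa=1$ and $F_i$ is $t$-independent, any pair $(\bar v(x),\bar m(x))$ that is independent of $t$ solves the first two equations of \eqref{eq:MFG2i} if and only if it solves the stationary MFG \eqref{eq:zs_main} with the same $F_i$. Prescribing boundary data $\bar v=f$, $\bar m=g$ on $\Sigma$ and taking $f_i(x)=\bar m(x)$, such a $(\bar v,\bar m)$ becomes a genuine solution of \eqref{eq:MFG2i} whenever the terminal compatibility $\bar v(x)=G_i(x,\bar m(x))$ holds in $\overline{\Omega}$, and its Cauchy datum, constant in $t$, exactly reads off the stationary pair $(\bar m|_\Sigma,\partial_\nu\bar m|_\Sigma)$. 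To reach the full stationary Cauchy set rather than only this constrained family, I would supplement this observation with the high-order linearization around the stable state $(v_0,m_0)$: since $(v_0,m_0)$ is time-independent and $F_i^{(1)}, G_i^{(1)}$ depend only on $x$, time-independent boundary perturbations $(g,h)$ compatible with \eqref{compatibility conditions } produce time-independent solutions of the linearized system \eqref{MFG2:stable}, which are precisely first-order linearizations of the stationary MFG at the stationary limit of $(v_0,m_0)$. Iterating the process through the Taylor series of $F_i, G_i$ and matching Cauchy data order by order, the equality $\mathcal{C}^3_{F_1,G_1}=\mathcal{C}^3_{F_2,G_2}$ transfers into equality of all linearized stationary Cauchy data, whence $\mathcal{C}^2_{F_1}=\mathcal{C}^2_{F_2}$ follows by resummation using the analyticity built into the admissibility classes $\mathcal{A}$ and $\mathcal{B}$.

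Step 2: closing the loop. With $\mathcal{C}^2_{F_1}=\mathcal{C}^2_{F_2}$ in hand, $n\ge 3$, $F_i\in\mathcal{B}$ and the admissibility condition $F_i(\cdot,m_{0,i})=0$ already included, Theorem~\ref{zs_main_result} yields $F_1=F_2$ together with $m_{0,1}=m_{0,2}$; in particular $F_1^{(1)}\equiv F_2^{(1)}$, so the outstanding hypothesis of Theorem~\ref{assumeF_1} is now satisfied and that theorem delivers $G_1=G_2$. The main obstacle is Step~1: the terminal constraint $\bar v(x)=G_i(x,\bar m(x))$ cuts out only a one-parameter family of time-independent solutions, too thin to span $\mathcal{C}^2_{F_i}$ directly, so the argument must route through the high-order linearization around $(v_0,m_0)$ and verify carefully that time-independent boundary perturbations generate linearized solutions whose Cauchy traces exhaust the linearized analogue of $\mathcal{C}^2_{F_i}$. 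This in turn leans crucially on the stability condition of Definition~\ref{stable} to guarantee unique solvability of each linearized problem, and on checking that the high-order expansions about $(v_0,m_0)$ in the time-dependent and stationary settings are intertwined by the same Taylor data of $F$ and $G$.
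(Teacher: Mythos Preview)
Your two-step strategy---first invoke Theorem~\ref{zs_main_result} to obtain $F_1^{(1)}=F_2^{(1)}$, then feed this into Theorem~\ref{assumeF_1}---is exactly what the paper intends; the corollary is stated without detailed proof and is presented simply as the combination of these two theorems in the special case $\sigma=\kappa=1$, $F$ time-independent. So at the level of architecture, your proposal matches the paper.

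Where your write-up is more honest than the paper is in flagging the reduction from $\mathcal{C}^3$ to $\mathcal{C}^2$ as the genuine technical point. The paper treats this as self-evident and gives no argument. However, the specific mechanism you propose---restricting to time-independent boundary perturbations $(g,h)$ of the linearized system \eqref{MFG2:stable}---runs into a concrete obstruction you did not address: the compatibility conditions in Definition~\ref{stable} require $h(x,0)=0$ on $\Sigma$, and the linearized initial condition $m^{(1)}(x,0)=0$ in $\Omega$ forces any time-independent $m^{(1)}$ to vanish identically. So time-independent perturbations at the linearized level are trivial, and your proposed route through ``time-independent boundary perturbations generate linearized solutions whose Cauchy traces exhaust the linearized analogue of $\mathcal{C}^2_{F_i}$'' does not work as stated. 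One would instead need to argue either that the full stationary Cauchy set $\mathcal{C}^2_{F_i}$ is directly available (e.g., because the class of admissible solutions in $\mathcal{C}^3$ is taken to include all stationary solutions with varying initial data $f_i$, making the terminal/initial constraints inactive), or to extract the stationary information from genuinely time-dependent data by a different limiting argument. The paper does not close this gap either; your identification of it as ``the main obstacle'' is correct, but your proposed resolution needs revision.
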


\section{Well-posedness of the forward problems}\label{sec:wellpose}
Before we begin, we first investigate the well-posedness of the associated forward problems. The well-posedness of the MFG system is well-studied in recent years. For example, the author shows that the system \eqref{eq:MFG0} admits a unique classical solution with Neumann boundary condition in $\Gamma:=\Sigma\times(0,T)$ \cite{ricciardi2022master}. The regularity of the solutions to the forward problems is crucial to our inverse problem study, since we will be using the high order linearization method on the MFG system, which relies on the infinite differentiability of the system with respect to small variations around a given solution.

Recall that we assume the systems admit a stable solution. In fact, by a similar method in  \cite{LiuMouZhang2022InversePbMeanFieldGames}, we can show that if $(v_0,m_0)=(0,0)$ is a solution of \eqref{eq:MFG2}, then it is a stable solution. The case $(v_0,m_0)=(C,1)$ for some constant $C$ is discussed in \cite{LiuZhang2022-InversePbMFG}. Throughout this paper, we will always assume the system \eqref{eq:MFG2} admits a stable solution.

Then we have the following theorem:
\begin{theorem}\label{wellpose}
    Let $F\in\mathcal{A}$ and $G\in\mathcal{B}$ and suppose $(v_0,m_0)$ is a stable solution of \eqref{eq:MFG2}. Then,
    \begin{enumerate}[label=(\alph*)]	
		\item
		there exist constants $\delta>0$ and $C>0$ such that for any 
		\begin{equation*}
		\begin{aligned}
		 &g\in B_{\delta}( C^{2+\alpha,1+\frac{\alpha}{2}}(\Gamma)) :=\{f\in  C^{2+\alpha,1+\frac{\alpha}{2}}(\Gamma): \|f\|_{ C^{2+\alpha,1+\frac{\alpha}{2}}(\Gamma)}\leq\delta,(f(x,0)+m_0)|_{\Sigma}=0\},\\
          &h\in D_{\delta}( C^{2+\alpha,1+\frac{\alpha}{2}}(\Gamma)) :=\{f\in  C^{2+\alpha,1+\frac{\alpha}{2}}(\Gamma): \|f\|_{ C^{2+\alpha,1+\frac{\alpha}{2}}(\Gamma)}\leq\delta,(f(x,0)+v_0)|_{\Sigma}=0\},
		\end{aligned}    
		\end{equation*}

		the system 
  \begin{equation}\label{eq:MFG_wellpose}
    \begin{cases}
        -\partial_t v(x,t) -\sigma\Delta v(x,t) + \frac{1}{2}\kappa|\nabla v(x,t)|^2 = F(x,t,m) &\quad \text{in }Q,\\
        \partial_t m(x,t) -\Delta(\sigma m(x,t)) - \nabla\cdot(\kappa m\nabla v) = 0  &\quad \text{in }Q,\\
        v(x,t)=v_0|_{\Sigma\times(0,T)  }+h,\quad m(x,t)=m_0|_{\Sigma\times(0,T)}+g&\quad \text{in }\Sigma\times(0,T),\\
        v(x,T)=G(x,m(x,T)),\quad m(x,0)=f(x) &\quad \text{in }\Omega.
    \end{cases}
\end{equation} 

  has a solution $(v,m)\in
		[C^{2+\alpha,1+\frac{\alpha}{2}}(Q)]^2$ which satisfies
		\begin{equation}
  \begin{aligned}
      	\|(v-v_0,m-m_0)\|_{ C^{2+\alpha,1+\frac{\alpha}{2}}(Q)}:&= \|v-v_0\|_{C^{2+\alpha,1+\frac{\alpha}{2}}(Q)}+ \|m-m_0\|_{C^{2+\alpha,1+\frac{\alpha}{2}}(Q)}\\  
    &\leq C(\|g\|_{ C^{2+\alpha,1+\frac{\alpha}{2}}(\Gamma)}+\|h\|_{  C^{2+\alpha,1+\frac{\alpha}{2}}(\Gamma)}).
  \end{aligned}
		\end{equation}
		Furthermore, the solution $(v,m)$ is unique within the class
		\begin{equation}
      \{ (v,m)\in  C^{2+\alpha,1+\frac{\alpha}{2}}(Q)\times C^{2+\alpha,1+\frac{\alpha}{2}}(Q): \|(v-v_0,m-m_0)\|_{ C^{2+\alpha,1+\frac{\alpha}{2}}(Q)}\leq C\delta \}.
		\end{equation}		
		\item Define a function 
		\[
		S: B_{\delta}( C^{2+\alpha,1+\frac{\alpha}{2}}(\Gamma)\times D_{\delta}( C^{2+\alpha,1+\frac{\alpha}{2}}(\Gamma)\to C^{2+\alpha,1+\frac{\alpha}{2}}(Q)\times C^{2+\alpha,1+\frac{\alpha}{2}}(Q)
		\] by \[S(g,h):=(v,m),\]
		where $(v,m)$ is the unique solution to the MFG system \eqref{eq:MFG_wellpose}.
		Then for any $(g,h)\in B_{\delta}( C^{2+\alpha,1+\frac{\alpha}{2}}(\Gamma))^2$, $S$ is holomorphic at $(g,h)$.
	\end{enumerate}
\end{theorem}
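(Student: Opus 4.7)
The plan is to recast the existence problem as finding a zero of a holomorphic map between Banach spaces and then to apply the holomorphic implicit function theorem, which will deliver both parts (a) and (b) simultaneously.

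Set $w := v - v_0$ and $\mu := m - m_0$, and define a map
\[
\mathcal{K} : \bigl( B_{\delta}(C^{2+\alpha,1+\frac{\alpha}{2}}(\Gamma)) \times D_{\delta}(C^{2+\alpha,1+\frac{\alpha}{2}}(\Gamma)) \bigr) \times U \longrightarrow Z,
\]
where $U$ is a neighborhood of $(0,0)$ in $[C^{2+\alpha,1+\frac{\alpha}{2}}(Q)]^2$ and $Z$ is a product of H\"older spaces whose components encode, respectively, the two PDE residuals on $Q$, the lateral mismatches $w|_\Gamma - h$ and $\mu|_\Gamma - g$, the initial mismatch $\mu(\cdot,0)$ on $\Omega$, and the terminal mismatch $w(\cdot,T) - [G(\cdot,m_0(\cdot,T)+\mu(\cdot,T)) - v_0(\cdot,T)]$. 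Since $(v_0,m_0)$ satisfies \eqref{eq:MFG2}, one has $\mathcal{K}(0,0,(0,0)) = 0$.

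Holomorphy of $\mathcal{K}$ follows by composition: the admissibility $F \in \mathcal{A}$ and $G \in \mathcal{B}$ guarantees, via the convergent series \eqref{eq:FTime} and \eqref{eq:G}, that $\mu \mapsto F(\cdot,\cdot,m_0+\mu)$ and $\mu \mapsto G(\cdot,m_0+\mu)$ are holomorphic between the relevant H\"older spaces, while all other contributions (the quadratic term $\tfrac{1}{2}\kappa|\nabla v|^2$, the divergence $\nabla\cdot(\kappa m \nabla v)$, and the linear differential operators) are polynomial in their arguments.

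The key step is to compute the partial Fr\'echet derivative $\mathcal{L} := D_{(w,\mu)}\mathcal{K}\big|_{(0,0,(0,0))}$. A direct expansion gives
\begin{align*}
\mathcal{L}(w,\mu) = \Bigl(\, & -\partial_t w - \sigma \Delta w + \kappa\nabla v_0 \cdot \nabla w - F^{(1)}\mu,\\
& \partial_t \mu - \Delta(\sigma \mu) - \nabla\cdot(\kappa m_0 \nabla w) - \nabla\cdot(\kappa \mu \nabla v_0),\\
& \mu(\cdot,0),\ w(\cdot,T) - G^{(1)}\mu(\cdot,T),\ w|_\Gamma,\ \mu|_\Gamma \,\Bigr),
\end{align*}
which is precisely the linear operator appearing in \eqref{MFG2:stable}. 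By the stable-solution hypothesis (Definition \ref{stable}), $\mathcal{L}$ is a bijection from $[C^{2+\alpha,1+\frac{\alpha}{2}}(Q)]^2$ onto the closed subspace of $Z$ cut out by the compatibility conditions \eqref{compatibility conditions }; its continuous invertibility then follows from the open mapping theorem. The holomorphic implicit function theorem in Banach spaces (e.g.\ in the form of P\"oschel--Trubowitz) now yields $\delta, C > 0$ and a holomorphic map $(g,h) \mapsto (w,\mu)$ on $B_\delta \times D_\delta$ solving $\mathcal{K}((g,h),(w,\mu)) = 0$ uniquely in a $C\delta$-ball of the origin, with the Lipschitz estimate coming from the bound $\|\mathcal{L}^{-1}\|$. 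Setting $S(g,h) := (v_0 + w, m_0 + \mu)$ then yields simultaneously the estimate in (a) and the holomorphy in (b).

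The main obstacle I anticipate is the precise identification of the codomain $Z$. Parabolic Schauder theory up to the whole parabolic boundary of $Q$, including $t = T$ because of the backward $v$-equation, demands corner compatibility conditions linking the initial, lateral, and terminal data; one must isolate the closed subspace of $Z$ onto which $\mathcal{L}$ is surjective and check that the nonlinear map $\mathcal{K}$ actually lands there for all small $(g,h) \in B_\delta \times D_\delta$. The constraints $(f(x,0)+m_0)|_\Sigma = 0$ and $(f(x,0)+v_0)|_\Sigma = 0$ built into the definitions of $B_\delta$ and $D_\delta$, together with the assumption that $(v_0,m_0)$ itself is smooth enough to admit $C^{2+\alpha,1+\frac{\alpha}{2}}$ perturbations, are precisely what makes this bookkeeping go through.
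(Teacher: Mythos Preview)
Your proposal is correct and follows essentially the same approach as the paper: both recast the problem as finding a zero of a holomorphic map between H\"older-type Banach spaces, verify holomorphy via the series expansions for $F$ and $G$ together with the polynomial structure of the remaining terms, identify the partial derivative at the base point with the linearized system in Definition~\ref{stable}, invoke the stable-solution hypothesis to obtain invertibility, and conclude by the implicit function theorem. Your treatment is in fact somewhat more explicit than the paper's about the codomain $Z$ and the corner compatibility issues, which the paper handles only through the choice of the spaces $X_i$.
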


\begin{proof}
	Let 
	\begin{align*}
        &X_0:=\{ f\in C^{2+\alpha}(\Omega): (f(x,0)+m_0(x,0))|_{\Sigma}=0 \}\\
        &X_0':=\{ f\in C^{2+\alpha}(\Omega): (f(x,T)+v_0(x,T))|_{\Sigma}=0  \}\\
        &X_1:=\{f\in  C^{2+\alpha,1+\frac{\alpha}{2}}(\Gamma ):  ((m_0+f)(x,0))|_{\Sigma  }=0\}\\
		&X_1':= \{ f\in  C^{2+\alpha,1+\frac{\alpha}{2}}(\Gamma ):  ((v_0+f)(x,T)|_{\Sigma})=0 \} , \\
        &X_2:=\{f\in  C^{2+\alpha,1+\frac{\alpha}{2}}(Q):f(x,0)_{\Sigma}=0,(-\partial_t f(x,0)-\sigma(x,0)\Delta f(x,0))|_{\Sigma}=0 \}\\
        &X_2':=\{f\in  C^{2+\alpha,1+\frac{\alpha}{2}}(Q):f(x,T)_{\Sigma}=0,(-\partial_t f(x,T)+\Delta( \sigma(x,T) f(x,T))|_{\Sigma}=0 \}\\
		&X_3:=X_0\times X_0'\times [ \{ h\in C^{\alpha,\frac{\alpha}{2}}(Q):h(x,0)|_{\Sigma}=0\}]^2,
	\end{align*} and we define a map $\mathscr{L}:X_1\times X'_1\times X_2\times X_2' \to X_3$ by that for any $(g,h,\tilde v,\tilde m)\in X_1\times X'_1\times X_2\times X_2'$,
	\begin{align*}
		&
		\mathscr{L}( g,h,\tilde v,\tilde m)(x,t)\\
		:=&\big( \tilde v(x,t)|_{\Gamma}-v_0|_{\Gamma}-g , \tilde m(x,t)|_{\Gamma}-m_0|_{\Gamma}-h , 
		-\p_t\tilde v(x,t)-\Delta \tilde v(x,t)\\ &+\frac{\kappa|\nabla \tilde v(x,t)|^2}{2}- F(x,t,\tilde m(x,t)), 
		\p_t \tilde m(x,t)-\Delta \tilde m(x,t)-\nabla\cdot(\kappa\tilde m(x,t)\nabla \tilde v(x,t))  \big) .
	\end{align*}

	First, we show that $\mathscr{L} $ is well-defined. Since the
	H\"older space is an algebra under the point-wise multiplication, we have $|\nabla v|^2, \nabla\cdot(\kappa m(x,t)\nabla v(x,t))  \in C^{\alpha,\frac{\alpha}{2}}(Q ).$
	By the Cauchy integral formula,
	\begin{equation}\label{eq:F1}
		F^{(k)}\leq \frac{k!}{R^k}\sup_{|z|=R}\|F(\cdot,\cdot,z)\|_{C^{\alpha,\frac{\alpha}{2}}(Q ) },\ \ R>0.
	\end{equation}
	Then there is $L>0$ such that for all $k\in\mathbb{N}$,
	\begin{equation}\label{eq:F2}
		\left\|\frac{F^{(k)}}{k!}m^k\right\|_{C^{\alpha,\frac{\alpha}{2}}(Q )}\leq \frac{L^k}{R^k}\|m\|^k_{C^{\alpha,\frac{\alpha}{2}}(Q)}\sup_{|z|=R}\|F(\cdot,\cdot,z)\|_{C^{\alpha,\frac{\alpha}{2}}(Q) }.
	\end{equation}
	By choosing $R\in\mathbb{R}_+$ large enough and by virtue of \eqref{eq:F1} and \eqref{eq:F2}, it can be seen that the series  converges in $C^{\alpha,\frac{\alpha}{2}}(Q )$ and therefore $F(x,t,m(x,t))\in  C^{\alpha,\frac{\alpha}{2}}(Q).$  
	
	Using the compatibility assumption and regularity conditions on $G$, we have that $\mathscr{L} $ is well-defined. Now we move to show that $\mathscr{L}$ is holomorphic. It suffices to verify that it is weakly holomorphic because  $\mathscr{L}$ is clearly locally bounded. In other words, we aim to show that the map
	$$\lambda\in\mathbb C \mapsto \mathscr{L}((g,h,\tilde v,\tilde m)+\lambda (\bar g,\bar h,\bar v,\bar m))\in X_3,\quad\text{for any $(\bar g,\bar h,\bar v,\bar m)\in X_1\times X_1'\times X_2\times X_2'$}$$
	is holomorphic. In fact, this follows from the condition that $F\in\mathcal{A}$ and $G\in\mathcal{B}$.

	Note that $ \mathscr{K}( 0,0,v_0,m_0)= 0$ because $(v_0,m_0)$ is a stable solution, so we have that $\nabla_{(\tilde v,\tilde m)} \mathscr{K} (0,0,v_0,m_0)$ is a linear isomorphism between $X_2\times X_2'$ and $X_3$. Hence, by the Implicit Function Theorem, this theorem is proved.
\end{proof}
For the stationary case, we can get the well-posedness result by the same method, one may refer to \cite{ding2023determining} for a full proof. Furthermore,  the maps of boundary data to solution are $C^{\infty}$-Fr\'{e}chet differentiable,
thus we can also derive that the corresponding Dirichlet-to-Neumann
 map is also $C^{\infty}$-Fr\'{e}chet differentiable.

\section{High order linearization}\label{sec:linearzation_discussion}
\subsection{Linearization in probability space}
In \cite{Linear_in_Tn}, the authors consider the the \textit{linearized system} of MFG system in $\mathbb{T}^n:=\mathbb{R}^n/\mathbb{Z}^n$. The this theory is further developed and it is consider in a bounded domain in \cite{linear_in_Om}. We briefly introduce the basic settings and assumptions here.

 Since we need to study the linearized system  for our inverse problem study, we define the variation of a function defined on $\mathcal{P}(\Omega) $. Recall that  $\mathcal{P}(\Omega)$ denotes the set of probability measures on $\Omega$ and let $U$ be a real function defined on $\mathcal{P}(\Omega) $.
	
	\begin{definition}\label{def_der_1}
		Let $U :\mathcal{P}(\Omega)\to\mathbb{R}$. We say that $U$ is of class $C^1$ if there exists a continuous map $K:  \mathcal{P}(\Omega)\times \Omega\to\mathbb{R}$ such that, for all $m_1,m_2\in\mathcal{P}(\Omega) $ we have
		\begin{equation}\label{derivation}
			\lim\limits_{s\to 0^+}\frac{U\big(m_1+s(m_2-m_1)-U(m_1)\big)}{s}=\int_{\Omega} K(m_1,x)d(m_2-m_1)(x).
		\end{equation}
	\end{definition}
	Note that the definition of $K$ is up to additive constants. We define  the derivative
	$\dfrac{\delta U}{\delta m}$ as the unique map $K$ satisfying $\eqref{derivation}$ and 
	\begin{equation}
		\int_{\Omega} K(m,x) dm(x)=0.
	\end{equation}
	Similarly, we can define higher order derivatives of $U$, and we refer to \cite{linear_in_Om} for more related discussion.
	Finally, we  define the Wasserstein distance between $m_1$ and $m_2$ in $\mathcal{P}(\Omega)$, which shall be needed in studying the regularity of the derivative $\dfrac{\delta U}{\delta m}$.
	\begin{definition}\label{W_distance}
		Let $m_1,m_2$ be two Borel probability measures on $\Omega$. Define
		\begin{equation}
			d_1(m_1,m_2):=\sup_{Lip(\psi)\leq 1}\int_{\Omega}\psi(x)d(m_1-m_2)(x),
		\end{equation}
		where $Lip(\psi)$ denotes the Lipschitz constant for a Lipschitz function, i.e., 
		\begin{equation}\label{eq:Lip1}
			Lip(\psi)=\sup_{x, y\in\Omega, x\neq y}\frac{|\psi(x)-\psi(y)|}{|x-y|}. 
		\end{equation}
	\end{definition}
	
		We need to point out that in Definitions~\ref{def_der_1} and \ref{W_distance}, $m$ (i.e. $m_1$ or $m_2$) is viewed as a distribution. However, in other parts of the paper, we use $m$ to denote the density of a distribution such as in the MFG system \eqref{eq:MFG0}.

Finally, the relationship between the MFG system and its linearized system can be stated as follows:
 Let $(v_1, m_1)$ and $(v_2, m_2)$ be two solutions of the Mean Field Games system, associated with the starting initial conditions $m_{0}^1$
 and $m_{0}^2$. Let $(s,\rho)$ be the solution of the linearized system  related to $(u_2, m_2)$, with initial condition $m_{0}^1-m_{0}^2$. Then we have 
the norms of $v_1-v_2-s$ and $u_1-u_2-\rho$ in suitable function spaces are bounded by 
$Cd_1(m^1_{0},m_0^2)$. For details of this theorem we also refer to \cite{linear_in_Om}. However, since we work in H\"{o}lder spaces in this paper and assume the the measure $m$  always belongs to $C^{2+\alpha}(\Omega)$ at least, this result is replaced by Theorem $\ref{wellpose}$.

Next, we shall discuss the  linearized systems in our setting and  their relationship in next subsection.

\subsection{High order linearized systems}
With the smoothness of the solutions to \eqref{eq:MFG2} at hand, we are ready to conduct the high order linearization. We will show the method for the time-dependent case, and simply state the corresponding results for the stationary case. 
Let 
\begin{equation}
\begin{aligned}
      &v(x,t)|_{\Gamma}=v_0|_{\Gamma}+\sum_{l=1}^{N}\varepsilon_lg_l|_{\Gamma}\\
      &m(x,t)|_{\Gamma}=m_0|_{\Gamma}+\sum_{l=1}^{N}\varepsilon_lh_l|_{\Gamma}
\end{aligned}
\end{equation}
where $g_l,h_l\in C^{2+\alpha,1+\frac{\alpha}{2}}(\mathbb{R}^n\times\mathbb{R})$  and $\varepsilon = (\varepsilon_1,\varepsilon_2, \dots,\varepsilon_N) \in \mathbb{R}^N$ with $|\varepsilon| = \sum_{l=1}^N |\varepsilon_l|$ small enough. Recall that $(v_0,m_0)$ is a stable solution of system \eqref{eq:MFG2} and $m_0\geq 0$, so $m(x,t)|_{\Gamma}\geq0$ for $\varepsilon$ small enough. Then, by Theorem \ref{wellpose}, there exists a unique solution $(v(x,t;\varepsilon), m(x,t;\varepsilon))$ of \eqref{eq:MFG2} and $(v(x,t;0), m(x,t;0))=(v_0,m_0)$ is the solution of \eqref{eq:MFG2} when $\varepsilon = 0$.

Let $S$ be the solution operator of \eqref{eq:MFG2} defined in Theorem \ref{wellpose}. Then there exists a bounded linear operator $A$ from $\mathcal{H}:=B_{\delta}( C^{2+\alpha,1+\frac{\alpha}{2}}(\Gamma))\times D_{\delta}( C^{2+\alpha,1+\frac{\alpha}{2}}(\Gamma))$ to $[C^{2+\alpha,1+\frac{\alpha}{2}}(Q)]^2$ such that
\begin{equation}
	\lim\limits_{\|(g,h)\|_{\mathcal{H}}\to0}\frac{\|S(g,h)-S(v_0,m_0)- A(g,h)\|_{[C^{2+\alpha,1+\frac{\alpha}{2}}(Q)]^2}}{\|(g,h)\|_{\mathcal{H}}}=0,
\end{equation} 
where $\|(g,h)\|_{\mathcal{H}}:=\|g\|_{ C^{2+\alpha,1+\frac{\alpha}{2}}(\Gamma)      }+\|h\|_{C^{2+\alpha,1+\frac{\alpha}{2}}(\Gamma)}$.
Now we consider $\varepsilon_l=0$ for $l=2,\dots,N$ and fix $f_1$.
Notice that if $F\in\mathcal{A}$ , then $F$ depends on the distribution $m$ locally. We have that 
	$$
	\dfrac{\delta F}{ \delta m}(x,m_0)(\rho(x,t)):=\left<\dfrac{\delta F}{ \delta m}(x,m_0,\cdot),\rho(x,t)\right>_{L^2}=
	F^{(1)}(x)\rho(x,t),
	 $$ 
	 up to a constant. 
Then it is easy to check that $A(g,h)\Big|_{\varepsilon_1=0}$ is the solution map of the following system which is called the first-order linearization system:
\begin{equation}\label{MFG2Linear1}
    \begin{cases}
        -\partial_t v^{(1)}(x,t) -\sigma \Delta v^{(1)}(x,t) + \kappa\nabla v_0\cdot \nabla v^{(1)}(x,t)= F^{(1)}(x,t)m^{(1)}(x,t) &\quad \text{in }Q,\\
        \partial_t m^{(1)}(x,t) -\Delta (\sigma m^{(1)}(x,t)) - \nabla\cdot(\kappa m_0\nabla v^{(1)}(x,t)) -\nabla \cdot (\kappa m^{(1)}(x,t)\nabla v_0)= 0  &\quad \text{in }Q,\\
        v^{(1)}(x,t)=g_1,\quad m^{(1)}(x,t)=h_1 &\quad \text{in }\Gamma,\\
        v^{(1)}(x,T)=G^{(1)}(x)m^{(1)}(x,T), \quad m^{(1)}(x,0) = 0  &\quad \text{in }\Omega.
    \end{cases}
\end{equation}

 In the following, we define
\begin{equation}\label{eq:ld1}
 (v^{(1)}, m^{(1)} ):=A(g,h)\Big|_{\varepsilon_1=0}. 
 \end{equation}
For notational convenience, we write
\begin{equation}\label{eq:ld2}
v^{(1)}=\partial_{\varepsilon_1}v(x,t;\varepsilon)|_{\varepsilon=0}\quad\text{and}\quad m^{(1)}=\partial_{\varepsilon_1}m(x,t;\varepsilon)|_{\varepsilon=0}.
\end{equation}
We shall utilize such notations in our subsequent discussion in order to simplify the exposition and their meaning should be clear from the context. In a similar manner, we can define, for all $l\in \mathbb{N}$, $v^{(l)} := \left. \partial_{\varepsilon_l} v \right\rvert_{\varepsilon = 0}$, $m^{(l)} := \left. \partial_{\varepsilon_l} m \right\rvert_{\varepsilon = 0}$, 
we obtain a sequence of similar systems.

For the higher orders, we consider
\[v^{(1,2)}:= \left. \partial_{\varepsilon_1} \partial_{\varepsilon_2} v \right\rvert_{\varepsilon = 0}, \quad m^{(1,2)}:= \left. \partial_{\varepsilon_1} \partial_{\varepsilon_2} m \right\rvert_{\varepsilon = 0}.\]
Similarly, $(v^{(1,2)},m^{(1,2)})$ can be viewed as the output of the second-order Fr\'echet derivative of $S$ at a specific point. By following similar calculations in deriving \eqref{MFG2Linear1}, one can show that the second-order linearization is given as follows:
\begin{equation}\label{MFGQuadraticLinear2}
    \begin{cases}
        -\partial_t v^{(1,2)} -\sigma\Delta v^{(1,2)} + \kappa\nabla v_0 \cdot \nabla v^{(1,2)} + \kappa\nabla v^{(1)} \cdot \nabla v^{(2)} \\\qquad\qquad\qquad\qquad = F^{(1)}m^{(1,2)} + F^{(2)}m^{(1)}m^{(2)} &\quad \text{in }Q,\\
        \partial_t m^{(1,2)} -\Delta (\sigma m^{(1,2)}) -\nabla\cdot(\kappa m_0\nabla v^{(1,2)}) - \nabla \cdot(\kappa m^{(2)} \nabla v^{(1)}) \\\qquad\qquad\qquad\qquad = \nabla\cdot(\kappa m^{(1)} \nabla v^{(2)}) + \nabla\cdot(\kappa m^{(1,2)} \nabla v_0)   &\quad \text{in }Q,\\
        v^{(1)}(x,T)=G^{(1)}(x)m^{(1,2)}(x,T)+G^{(2)}(x)m^{(1)}(x,T)m^{(2)}(x,T) &\quad \text{in }\Omega,\\
        m^{(1,2)}(x,0) = 0  &\quad \text{in }\Omega.
    \end{cases}
\end{equation}

Inductively, for $N\in \mathbb{N}$, we consider
\[v^{(1,2,\dots,N)}:= \left. \partial_{\varepsilon_1} \partial_{\varepsilon_2} \cdots \partial_{\varepsilon_N} v \right\rvert_{\varepsilon = 0}, \quad m^{(1,2,\dots,N)}:= \left. \partial_{\varepsilon_1} \partial_{\varepsilon_2} \cdots \partial_{\varepsilon_N} m \right\rvert_{\varepsilon = 0},\] and obtain a sequence of parabolic systems.

In the stationary case, for $m_0$ not necessarily constant, we have the following first-order linearization system, when $\sigma=\kappa\equiv1$:
\begin{equation}\label{eq:MFG2StatLinear}
	\begin{cases}
		-\Delta v^{(1)}(x)+\nabla v_0(x)\cdot\nabla v^{(1)}(x)=F^{(1)}(x)m^{(1)}(x)  & \text{ in } \Omega\\
		-\Delta (m^{(1)}(x))-\nabla\cdot(m_0(x)\nabla v^{(1)}(x))-\nabla\cdot(m^{(1)}(x)\nabla v_0(x))=0  & \text{ in } \Omega.\\
	\end{cases}
\end{equation}
The higher-order linearized systems follow similarly.

\section{Stationary MFG}\label{sec:StatMFGProof}
From Theorem \ref{assumeF_1}, we see that assuming the a priori knowledge of $F^{(1)}$ is necessary to establish unique identifiability results for the time-dependent MFG, as we will demonstrate in the next section. However, the term $F^{(1)}m$ plays an important role in the proof Theorem \ref{assumeF_1}, and it is not a necessary condition to obtain this result. In \cite{LiuMouZhang2022InversePbMeanFieldGames}, the authors show that while the constant term cannot be recovered, it is possible to recover the first-order term from the knowledge of the Dirichlet-to-Neumann map. However, their proof relies on the linearization process around the trivial solution $(v,m)=(0,0)$. In this section, we establish that the running cost $F$ can be recovered without the need for knowledge of the first-order term. The proof does not rely on the existence of a trivial solution in the stationary case, and further discussions regarding the general case will be presented in future work.

\subsection{Exponentially growing solutions of linearization system}
As we discussed in Section \ref{sec:linearzation_discussion}, we consider the first-order linearization system of \eqref{eq:zs_main} in $\Omega$:
\begin{equation}\label{eq:zs_linear}
	\begin{cases}
		-\Delta v^{(1)}+\nabla v_0\cdot\nabla v^{(1)}=F^{(1)}m^{(1)}  & \text{ in } \Omega\\
		-\Delta (m^{(1)})-\nabla\cdot(m_0\nabla v^{(1)})-\nabla\cdot(m^{(1)}\nabla v_0)=0  & \text{ in } \Omega,\\
	\end{cases}
\end{equation}
where $(v_0,m_0)$ is a solution of \eqref{eq:zs_main}. One classical result \cite{MFG-stationary,MFG-book} shows that one may choose $(v_0,m_0)$ such that
\begin{equation}\label{m=e^{-v}}
\begin{cases}
     m_0=\dfrac{e^{-v_0}}{\int_{\Omega}e^{-v_0}dx}                 &\text{ in } \Omega\\
     \partial_{\nu}m_0=\partial_{\nu} v_0=0  &\text{ in } \Sigma
\end{cases}
\end{equation}
for some constant $C>0.$ It can be shown by classical fixed point argument.
 Notice that \eqref{m=e^{-v}} implies that 
\begin{equation}\label{eq:StatRelation}
	\nabla (m_0)=-m_0\nabla v_0.
\end{equation}
Expanding the second equation in \eqref{eq:zs_linear} and substituting in the first equation in \eqref{eq:zs_linear}, we have
\begin{equation}\label{calc}
	\begin{aligned}
		0&=-\Delta m^{(1)}-(\nabla v_0\cdot\nabla m^{(1)}+ m^{(1)} \Delta v_0)-(m_0\Delta v^{(1)}+\nabla m_0\cdot\nabla v^{(1)})\\
		&=-\Delta m^{(1)}-(\nabla v_0\cdot\nabla m^{(1)}+ m^{(1)} \Delta v_0) -[m_0(\nabla v_0\cdot\nabla v^{(1)}- F^{(1)}m^{(1)}) +\nabla m_0\cdot\nabla v^{(1)}]\\
		&=-\Delta m^{(1)}-(\nabla v_0\cdot\nabla m^{(1)}+(\Delta v_0-F^{(1)}m_0) m^{(1)})-[(m_0\nabla v_0+\nabla m_0)\cdot\nabla v^{(1)}]\\
		&=-\Delta m^{(1)}-\nabla v_0\cdot\nabla m^{(1)}-\left(\Delta v_0-F^{(1)}m_0\right) m^{(1)},
	\end{aligned}
\end{equation}
where the last step makes use of \eqref{eq:StatRelation}.

From this last step, the next natural step is to focus on PDEs in the form
\begin{equation}\label{semi-linear}
	\Delta m\pm\nabla v_0\cdot\nabla m+ qm=0,
\end{equation}
where $\nabla v_0\in [C^1(\Omega)]^n$ and $q\in L^{\infty}(\Omega).$

\begin{theorem}\label{thm:cgo}
	There exist solutions of \eqref{semi-linear} in the form
	\begin{equation}\label{cgo}
		m=e^{\xi\cdot x\mp\frac{1}{2}v_0}(1+\omega(x;\xi)),
	\end{equation}
	where $\xi\in \mathbb{C}^n$, $\xi\cdot \xi=0$ is a complex vector and $\omega(x;\xi)\in H^1(\Omega)$ satisfies the following decay condition:
	\begin{equation}\label{decay}
		\|\omega(x;\xi)\|_{L^2(\Omega)}\leq C|\xi|^{-1}
	\end{equation}
	for some constant $C>0.$
\end{theorem}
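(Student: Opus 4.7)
The plan is to reduce \eqref{semi-linear} to a classical Schrödinger equation via a gauge (Liouville-type) substitution, and then construct the complex geometric optics correction by the classical Sylvester--Uhlmann estimate.

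First I would set $m = e^{\mp v_0/2} u$, the factor being chosen precisely to kill the first-order drift $\pm \nabla v_0 \cdot \nabla m$. A direct computation, expanding $\nabla m$ and $\Delta m$ and collecting like terms, yields
\begin{equation*}
\Delta m \pm \nabla v_0 \cdot \nabla m + qm
\;=\; e^{\mp v_0/2}\bigl(\Delta u + \tilde q\, u\bigr),
\qquad
\tilde q \;=\; q \;\mp\; \tfrac{1}{2}\Delta v_0 \;-\; \tfrac{1}{4}|\nabla v_0|^2,
\end{equation*}
so that $m$ solves \eqref{semi-linear} if and only if $u$ solves the pure Schrödinger equation $\Delta u + \tilde q\, u = 0$. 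Because $\nabla v_0 \in [C^1(\Omega)]^n$ and $q \in L^\infty(\Omega)$, the new potential $\tilde q$ is bounded.

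Next I would seek $u$ of standard CGO form $u(x) = e^{\xi \cdot x}(1 + \omega(x;\xi))$ with $\xi \cdot \xi = 0$. Substituting and using $\xi \cdot \xi = 0$ to eliminate the top-order term $\xi \cdot \xi\,(1+\omega)$, one finds that $\omega$ must satisfy
\begin{equation*}
\Delta \omega + 2\,\xi \cdot \nabla \omega \;=\; -\tilde q\,(1+\omega) \quad \text{in } \Omega.
\end{equation*}
To produce $\omega$ with the required decay, I would extend $\tilde q$ by zero to a ball $B \supset \overline{\Omega}$ and invoke the classical Sylvester--Uhlmann result: the conjugated Laplacian $L_\xi := \Delta + 2\,\xi \cdot \nabla$ admits a right-inverse $G_\xi$ satisfying $\|G_\xi f\|_{L^2(B)} \leq C|\xi|^{-1}\|f\|_{L^2(B)}$ for $f$ supported in $B$ and $|\xi|$ sufficiently large. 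The equation then becomes the fixed-point problem $\omega = -G_\xi(\tilde q + \tilde q\,\omega)$; since $\omega \mapsto -G_\xi(\tilde q\,\omega)$ has $L^2 \to L^2$ norm at most $C\|\tilde q\|_{L^\infty}/|\xi|$, this is a contraction for $|\xi|$ large, yielding a unique $\omega \in L^2$ with $\|\omega\|_{L^2(\Omega)} \leq C|\xi|^{-1}$. The $H^1$ regularity then follows by rewriting the equation as $\Delta \omega = -2\,\xi \cdot \nabla \omega - \tilde q(1+\omega)$ and applying standard elliptic estimates, while for the remaining bounded range of $|\xi|$ the constant $C$ can simply be enlarged.

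The principal technical point is securing the sharp $O(|\xi|^{-1})$ decay rather than mere $L^2$-boundedness of $\omega$, which is precisely the content of the Sylvester--Uhlmann estimate and is obtained via Fourier multipliers or the Faddeev Green's function on $\mathbb{R}^n$. The gauge substitution $m = e^{\mp v_0/2} u$ is the essential preliminary reduction: it converts the MFG-linearized operator, whose first-order drift term would otherwise obstruct a direct CGO construction, into a classical Schrödinger operator to which the standard machinery applies verbatim.
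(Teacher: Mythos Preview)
Your proposal is correct and follows essentially the same route as the paper. The only cosmetic difference is that you factor the substitution into two steps---first the Liouville gauge $m=e^{\mp v_0/2}u$ to kill the drift, then the CGO ansatz $u=e^{\xi\cdot x}(1+\omega)$---whereas the paper inserts $m=e^{\xi\cdot x\mp v_0/2}(1+\omega)$ in one go; both yield the identical equation $L_\xi\omega+\tilde q(1+\omega)=0$ and both close via the same $L_\xi^{-1}$ estimate (the paper cites it as Lemma~\ref{zs_main_lemma}, you call it Sylvester--Uhlmann). One small point: your separate bootstrap ``$\Delta\omega=-2\xi\cdot\nabla\omega-\tilde q(1+\omega)$ plus elliptic estimates'' for $H^1$ regularity is circular as written (you need $\nabla\omega$ on the right before you have it), but this is moot since the very lemma you invoke already returns $G_\xi f\in H^1$ directly, which is how the paper obtains it.
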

The proof of this theorem is based on the following lemma, which has been proved in \cite{sun_cgo}.
\begin{lemma}\label{zs_main_lemma}
	Let $L_{\xi}:=\Delta+ 2\xi\cdot\nabla$. Then the operator $L_{\xi}$ admits a bounded
	inverse $L_{\xi}^{-1}: L^2(\Omega)\to H^1(\Omega)$. If $f\in L^{\infty}(\Omega)$ and 
	$v=L^{-1}_{\xi}(f)\in H^1(\Omega)$, then
	$$\|v\|_{L^2(\Omega)}\leq C|\xi|^{-1}\|f\|_{L^2(\Omega)},$$
	$$\|\nabla v\|_{L^2(\Omega)}\leq C\|f\|_{L^2(\Omega)},$$
	where $C$ is constant only depends on $\Omega.$
\end{lemma}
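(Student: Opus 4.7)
The plan is to follow the classical Sylvester--Uhlmann construction for complex geometric optics, reducing the solvability and the claimed bounds for $L_\xi^{-1}$ to Fourier-multiplier estimates on $\mathbb R^n$ and then restricting back to $\Omega$. Since $\Omega$ is bounded, any $f\in L^\infty(\Omega)$ automatically lies in $L^2(\Omega)$, so I would first extend $f$ by zero to a compactly supported $\tilde f\in L^2(\mathbb R^n)$ with $\|\tilde f\|_{L^2(\mathbb R^n)}=\|f\|_{L^2(\Omega)}$. The operator $L_\xi=\Delta+2\xi\cdot\nabla$ has Fourier symbol $p_\xi(k)=-|k|^2+2i\xi\cdot k$; writing $\xi=s(e_1+ie_2)$ with $e_1\perp e_2$ real unit vectors and $s=|\xi|/\sqrt{2}$ (as forced by the condition $\xi\cdot\xi=0$ carried over from Theorem \ref{thm:cgo}), a direct computation yields
\[
|p_\xi(k)|^2=\bigl(|k|^2+2s\,e_2\cdot k\bigr)^2+4s^2(e_1\cdot k)^2,
\]
so $p_\xi$ vanishes only on the codimension-two ``Ewald sphere'' $S_\xi=\{e_1\cdot k=0,\ |k+se_2|=s\}$.

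Next I would define $v:=\mathcal F^{-1}\bigl(\hat{\tilde f}/p_\xi\bigr)$ via a limiting-absorption argument (replace $p_\xi$ by $p_\xi-i\varepsilon$ and let $\varepsilon\to 0^+$) inside an Agmon--H\"ormander weighted space $\langle x\rangle^{-\delta}L^2(\mathbb R^n)$ with $\delta>1/2$. The analytic core of the proof is the weighted bound
\[
\bigl\|\langle x\rangle^{-\delta}L_\xi^{-1}g\bigr\|_{L^2(\mathbb R^n)}\le\frac{C}{s}\bigl\|\langle x\rangle^{\delta}g\bigr\|_{L^2(\mathbb R^n)},
\]
which I would establish by rotating to coordinates in which $e_1,e_2$ are coordinate axes, translating $k\mapsto k-se_2$ to center on $S_\xi$, decomposing $\mathbb R^n$ into dyadic neighborhoods of $S_\xi$, and combining the explicit factorization of $|p_\xi|^2$ above with a one-dimensional Hardy-type inequality in the $e_1\cdot k$ direction to extract the factor $s^{-1}$ from the transverse structure near the zero set. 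Because $\tilde f$ has compact support in a fixed ball containing $\Omega$, the weighted norms collapse to universal multiples of $\|f\|_{L^2(\Omega)}$, and restricting $v$ back to $\Omega$ delivers $\|v\|_{L^2(\Omega)}\le C|\xi|^{-1}\|f\|_{L^2(\Omega)}$.

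For the gradient estimate I would return to Fourier variables: $\widehat{\partial_j v}(k)=ik_j\hat{\tilde f}(k)/p_\xi(k)$, and split $\mathbb R^n$ into the high-frequency region $|k|\ge 2|\xi|$, where the triangle inequality forces $|p_\xi|\gtrsim|k|^2$ and hence $|k_j|/|p_\xi|\lesssim 1/|k|\le 1/|\xi|$, and the low-frequency region $|k|\le 2|\xi|$, where $|k_j|\le 2|\xi|$ is absorbed against the $|\xi|^{-1}$ already produced by the weighted estimate on $\hat v$. Combining the two regions gives $\|\nabla v\|_{L^2(\Omega)}\le C\|f\|_{L^2(\Omega)}$ with constant independent of $\xi$, and together with the previous bound proves boundedness of $L_\xi^{-1}:L^2(\Omega)\to H^1(\Omega)$. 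The main obstacle, and the step I expect to consume most of the work, is the weighted $L^2$ bound near the Ewald sphere $S_\xi$: this is where the oscillation--cancellation structure of the Faddeev Green's function is used and where the decisive $|\xi|^{-1}$ factor is actually earned; the extension, restriction, and high-frequency symbol bookkeeping are routine. The resulting bounds are precisely those stated, and match the construction in \cite{sun_cgo}.
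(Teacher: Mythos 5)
The paper does not actually prove this lemma: it is quoted as a known result and attributed to \cite{sun_cgo}, so there is no in-paper argument to compare against, and your sketch is a reconstruction of the standard Sylvester--Uhlmann/Faddeev Green's function proof that underlies that citation. The outline is correct: the symbol computation $|p_\xi(k)|^2=(|k|^2+2s\,e_2\cdot k)^2+4s^2(e_1\cdot k)^2$ is right, the characteristic set is the codimension-two sphere you describe, the factor $|\xi|^{-1}$ is earned from the weighted $L^2$ estimate localized near that set (via the factorization $|k|^2+2se_2\cdot k=(|k+se_2|-s)(|k+se_2|+s)\approx 2s(|k+se_2|-s)$, which exposes the transverse $s\sqrt{r^2+t^2}$ behaviour of $|p_\xi|$), and zero-extension followed by restriction legitimately converts the weighted $\mathbb{R}^n$ bounds into the unweighted bounds on the bounded $\Omega$, yielding a \emph{right} inverse --- which is all the lemma can mean, since $L_\xi$ on $\Omega$ with no boundary condition is not injective. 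Two caveats: the uniform weighted estimate near the Ewald sphere is asserted rather than carried out, and it is the entire analytic content of the lemma, so as written this is a correct road map rather than a complete proof; and in the gradient bound the spatial weight $\langle x\rangle^{-\delta}$ does not commute with a sharp frequency cutoff, so the ``absorption'' in the region $|k|\le 2|\xi|$ needs the usual commutator bookkeeping (harmless here because $\tilde f$ is compactly supported, but it deserves a line). An alternative, more elementary route to exactly these two estimates is H\"ahner's periodic Fourier-series construction on a cube containing $\Omega$, which avoids the limiting-absorption and weighted-space machinery entirely; either route delivers the lemma as stated.
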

Now we give the proof of Theorem \ref{thm:cgo}. The idea of this proof is borrowed from \cite{sun_cgo}. 
\begin{proof}
	We will prove this theorem for the case
	\begin{equation}\label{semi-linear'}
		\Delta m+\nabla v_0\cdot\nabla m+ qm=0,
	\end{equation}
	and a similar argument can be employed to obtain the result for the other cases.
	
	Let $m=e^{\xi\cdot x-\frac{1}{2}v_0}(1+\omega(x;\xi))$. By direct computation,  \eqref{semi-linear'} is equivalent to
	\begin{equation}\label{for_omega}
		\Delta \omega+2\xi\cdot\nabla \omega+(q-\frac{1}{4}|\nabla v_0|^2-\frac{1}{4}\Delta v_0)(1+\omega)=0.
	\end{equation}
	Let $H:=q-\frac{1}{4}|\nabla v_0|^2-\frac{1}{4}\Delta v_0$ and $L_{\xi}:=\Delta+ 2\xi\cdot\nabla$. Then \eqref{for_omega} can be rewritten as
	\begin{equation}\label{operator_form}
		L_{\xi}\omega+H\omega=-H.
	\end{equation}
	By Lemma \ref{zs_main_lemma}, we take the action of $L^{-1}_{\xi}$ on both sides of \eqref{operator_form}, to obtain
	\begin{equation}\label{operator_form_2}
	     ( I+L^{-1}_{\xi}\circ H)\omega=-L^{-1}_{\xi}(H).
	\end{equation}
	Lemma \ref{zs_main_lemma} also implies that $L^{-1}_{\xi}\circ H$ is a bounded operator from 
	$L^2(\Omega)$ to $L^2(\Omega)$ and 
	$$\| L^{-1}_{\xi}\circ H  \|\leq C \|H\|_{L^{\infty}}|\xi|^{-1}$$
	for some constant $C$. Therefore, $(I+L^{-1}_{\xi}\circ H)^{-1} $ exists for large enough $|\xi|$. In other words, there exists a unique $L^2$-solution $\omega$ of \eqref{operator_form}, and the first part of Theorem \ref{thm:cgo} is proved.
	
	Notice that $\omega$ also satisfies 
	\begin{equation}\label{operator_form_3}
		\omega=L^{-1}_{\xi}(-H(1+\omega)).
	\end{equation}
	Since $H$ is bounded and by Lemma \ref{zs_main_lemma} again, there exists a constant $C>0$ such that
	$$\|\omega(x;\xi)\|_{L^2(\Omega)}\leq C|\xi|^{-1},$$
        $$ \|\omega(x;\xi)\|_{H^1(\Omega)}\leq C $$
	when $|\xi|$ is large enough.
\end{proof}
Since we mainly work in H\"{o}lder space and we only get exponentially growing solutions in $H^1(\Omega)$, we still need the following denseness property.
\begin{lemma}\label{denseness}
    	Let $v_0\in C^1(\Omega)$ and $q\in L^{\infty}(\Omega)$. For any solution $ m\in H^1(\Omega)$ to 
	\begin{equation}\label{Runge 1}
	\Delta m+\nabla v_0\cdot\nabla m+ qm=0,
	\end{equation}
	and any $\eta>0$, there exists a solution  $\hat{m}\in C^{2+\alpha}(\Omega)$ to \eqref{Runge 1} such that
	$$\|m-\hat{m}\|_{L^2(\Omega)}\leq \eta. $$
\end{lemma}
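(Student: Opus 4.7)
The plan is to prove this as a Runge-type approximation via duality and unique continuation. First, embed $\Omega$ in a slightly larger smoothly bounded domain $\Omega'$ containing $\overline{\Omega}$ such that $\Omega' \setminus \overline{\Omega}$ is connected (for instance, an open ball containing $\overline{\Omega}$), and extend $v_0, q$ to $\Omega'$. Writing $L = \Delta + \nabla v_0 \cdot \nabla + q$ and letting $L^*$ denote its formal adjoint, define
\[ E := \{\tilde m|_{\Omega} : \tilde m \in C^{2+\alpha}(\overline{\Omega}'),\ L\tilde m = 0 \text{ in } \Omega'\}. \]
By Hahn--Banach, the density of $E$ in $\{m \in H^1(\Omega) : Lm = 0\}$ with respect to the $L^2(\Omega)$-norm reduces to the following: any $g \in L^2(\Omega)$ satisfying $\int_\Omega g\, \tilde m \, dx = 0$ for every $\tilde m \in E$ must annihilate every $H^1(\Omega)$-solution of $Lm = 0$.

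Given such a $g$, extend it by zero to $\Omega'$; after shifting $q$ by a large constant if needed to ensure coercivity of the adjoint problem, solve $L^* w = g$ in $\Omega'$ with $w|_{\partial\Omega'} = 0$, obtaining $w \in H^1_0(\Omega')$. For each $\phi \in C^\infty(\partial\Omega')$, let $\tilde m_\phi \in C^{2+\alpha}(\overline{\Omega}')$ solve the Dirichlet problem $L \tilde m_\phi = 0$ in $\Omega'$ with $\tilde m_\phi|_{\partial\Omega'} = \phi$; this gives a parametrization of $E$. The annihilation property combined with Green's identity then reads
\[ 0 = \int_\Omega g\,\tilde m_\phi \, dx = \int_{\Omega'}(L^* w)\,\tilde m_\phi\,dx = \int_{\partial\Omega'} \phi\, \partial_\nu w\, dS, \]
which, by the arbitrariness of $\phi$, forces $\partial_\nu w = 0$ on $\partial \Omega'$. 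Thus $w$ carries zero Cauchy data on $\partial \Omega'$ and satisfies $L^* w = 0$ throughout $\Omega' \setminus \overline{\Omega}$.

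Extending $w$ by zero across $\partial \Omega'$ into a collar produces a weak $H^1$-solution of $L^* w = 0$ in a neighborhood of $\partial\Omega'$ that vanishes on an open set; Aronszajn-type weak unique continuation (applicable since the principal part is the Laplacian and the lower-order coefficients are bounded) then propagates the vanishing throughout the connected region $\Omega' \setminus \overline{\Omega}$. Consequently both $w|_{\partial \Omega}$ and $\partial_\nu w|_{\partial \Omega}$ vanish when approached from inside $\Omega$. A final application of Green's identity gives, for any $m \in H^1(\Omega)$ with $Lm = 0$,
\[ \int_\Omega g\, m\, dx = \int_\Omega (L^* w)\, m\, dx = \int_\Omega w\,(Lm)\,dx = 0, \]
closing the duality argument. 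The approximant $\hat m$ is then obtained as $\tilde m_\phi|_\Omega$ for appropriately chosen $\phi \in C^\infty(\partial\Omega')$.

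The main obstacle is the unique continuation step together with the derivation of $\partial_\nu w = 0$ on $\partial\Omega'$: both rely on the well-posedness of the Dirichlet problem for $L$ on $\Omega'$ (arranged via the coercivity shift) and on a UCP suitable for the low regularity of the coefficients. A secondary subtlety is that under the literal hypotheses $v_0 \in C^1(\Omega)$, $q \in L^\infty(\Omega)$, solutions are generically only $C^{1+\alpha}$; to land strictly in $C^{2+\alpha}(\Omega)$ one either uses interior elliptic regularity after a slight mollification of the extended coefficients, or tacitly strengthens the regularity on $v_0, q$ in the stationary MFG context (where both are in fact smooth). The duality scheme itself is unaffected by this point.
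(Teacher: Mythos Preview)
Your Runge-type argument via Hahn--Banach and unique continuation is correct in outline, but it is a genuinely different and considerably heavier route than the paper's. The paper argues directly: given $m\in H^1(\Omega)$, approximate its boundary trace $m|_\Sigma$ in $L^2(\Sigma)$ by a smooth function $M$, solve the Dirichlet problem $L\hat m=0$ in $\Omega$ with $\hat m|_\Sigma=M$, and invoke continuity of the boundary-to-interior solution map in $L^2$. No larger domain, no adjoint problem, no UCP is needed. Your approach, by contrast, embeds $\Omega\subset\Omega'$, passes to the adjoint, and uses unique continuation to propagate vanishing; this is the standard Runge machinery and has the advantage of not requiring the Dirichlet problem for $L$ on $\Omega$ itself to be well-posed, whereas the paper's argument implicitly does. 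Both approaches share the same regularity caveat you correctly flag: with only $v_0\in C^1$ and $q\in L^\infty$, Schauder theory does not deliver $C^{2+\alpha}$ solutions, and one must either mollify or use the extra smoothness available in the MFG application.

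One genuine slip in your write-up: the ``coercivity shift'' $q\mapsto q+C$ changes $L$ itself, hence changes both the set $E$ of $C^{2+\alpha}$ solutions and the class of $H^1$ solutions you are trying to approximate, so you cannot shift and keep the same density statement. The standard fix is instead to choose $\Omega'$ (e.g.\ a ball of generic radius) so that $0$ is not a Dirichlet eigenvalue of $L$ on $\Omega'$; Fredholm theory then gives solvability of both the forward Dirichlet problem $L\tilde m_\phi=0$ and the adjoint problem $L^*w=g$ without altering the operator.
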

 \begin{proof}
     Since $m\in H^1(\Omega)$, we have $m|_{\Sigma}\in L^2(\Sigma)$. Then by using the fact that $C^{\infty}(\Sigma)$ is dense in $L^2(\Sigma)$, there exist $M\in C^{\infty}(\Sigma)$ such that $\|M- m|_{\Sigma}\|_{L^2(\Omega)}\leq \eta$ for any $\eta>0.$

     Let $\hat{m}$ be the solution of
     \begin{equation*}
         \begin{cases}
             \Delta \hat{m}+\nabla v_0\cdot\nabla \hat{m}+ q\hat{m}=0 &\text{ in } \Omega\\
             \hat{m}=M &\text{ in }\Sigma.
         \end{cases}
     \end{equation*}
    Then $\hat{m}\in C^{2+\alpha}(\Omega)$ and $m-\hat{m}$ satisfies
    \begin{equation*}
          \begin{cases}
             \Delta (m-\hat{m})+\nabla v_0\cdot\nabla (m-\hat{m})+ q(m-\hat{m})=0 &\text{ in } \Omega\\
             (m-\hat{m})=m|_{\Sigma}-M &\text{ in }\Sigma.
         \end{cases}
    \end{equation*}
    Therefore,  for any $\eta>0$,
	$$\|m-\hat{m}\|_{L^2(\Omega)}\leq \|m|_{\Sigma}-M\|_{L^2(\Sigma)}\leq\eta. $$
 \end{proof}   
\subsection{Proof of Theorem \ref{zs_main_result}}

Now we are ready to give the proof of Theorem \ref{zs_main_result}.
\begin{proof}
Let $(v_{0,i},m_{0,i})$ be the solutions of \eqref{eq:zs_main} which satisfy \eqref{m=e^{-v}} with corresponding running cost $F_i(x,m)$, for $i=1,2$.

Since $F_i(x,m_{0,i})=0$ (since $F\in\mathcal{B}$, cf. Remark \ref{zeroCond}), we have 
\begin{equation}\label{v0eq}
    \begin{cases}
        -\Delta v_{0,i}+\frac{1}{2}|\nabla v_{0,i}|^2=0 & \text{ in }\Omega\\
        \partial_{\nu}v_{0,i}=0 &\text{ in }\Sigma.
    \end{cases}
\end{equation}
This shows that $\nabla v_{0,1}=\nabla v_{0,2}:=\nabla v_0$, and then $\Delta v_{0,1}=\Delta v_{0,2}:=\Delta v_0$. By \eqref{m=e^{-v}}, we can also let $m_0:=m_{0,1}=m_{0,2}.$ By the normalization resulting from $\lambda$ in \eqref{eq:MFGStat0}, we have the uniqueness of the stable state $(v_0,m_0)$.

As discussed in Section \ref{sec:linearzation_discussion}, the first-order linearization system of \eqref{eq:zs_main} in $\Omega$ is given, for $i=1,2$, by:
\begin{equation}\label{eq:zs_linear_12}
	\begin{cases}
		-\Delta v^{(1)}_i+\nabla v_{0}\cdot\nabla v^{(1)}_i=F_i^{(1)}m^{(1)}_i  & \text{ in } \Omega\\
		-\Delta m^{(1)}_i-\nabla\cdot(m_{0,i}\nabla v^{(1)}_i)-\nabla\cdot(m^{(1)}_i\nabla v_{0})=0  & \text{ in } \Omega.\\
	\end{cases}
\end{equation}
We have shown in \eqref{calc} that \eqref{eq:zs_linear_12} implies that
\begin{equation}
	\Delta m^{(1)}_i+\nabla v_{0}\cdot\nabla m^{(1)}_i+(\Delta v_{0}-F_i^{(1)}m_{0}) m^{(1)}_i=0.
\end{equation}
Let $\overline{m}:=m^{(1)}_1-m^{(1)}_2$. Since $\mathcal{C}^2_{F_1}=\mathcal{C}^2_{F_2}$, we have
\begin{equation}\label{m1-m2}
	\begin{cases}
		\Delta \overline{m} +\nabla v_{0}\cdot \nabla \overline{m}+\overline{m}\Delta v_{0}- F_1^{(1)}m_{0}\overline{m}=(F_1^{(1)}m_{0}-F_2^{(1)}m_{0})m_2 & \text{ in } \Omega\\ 
		\overline{m}=\partial_{\nu}\overline{m}=0,& \text{ in } \Sigma. 
	\end{cases}
\end{equation}
Let $u$ be a solution of the following equation:
\begin{equation*}
		\Delta u-\nabla v_{0}\cdot\nabla u-F_1^{(1)}m_{0}u=0.
\end{equation*}
Then we have
\begin{equation}\label{integral_by_part}
	\begin{aligned}
		\int_{\Omega}(F_1^{(1)}m_{0}-F_2^{(1)}m_{0})m_2u \, dx 
         &=\int_{\Omega} (\Delta \overline{m}+\nabla v_{0}\cdot\nabla \overline{m} +\overline{m}\Delta v_{0}- F_1^{(1)}\overline{m}m_0)u\, dx\\
		&=\int_{\Omega} \overline{m}\Delta u-\overline{m}\nabla v_{0}\cdot\nabla u-\overline{m}u\Delta v_{0}+\overline{m}u\Delta v_{0}-F_1^{(1)}\overline{m}m_{0}u\, dx\\
		&\quad +
		\int_{\Sigma}u\partial_{\nu}\overline{m}-\overline{m}\partial_{\nu}u      +\overline{m}u\partial_{\nu}v_{0}\,ds\\
		&=\int_{\Omega}\overline{m}(\Delta u-\nabla v_{0}\cdot\nabla u-F_1^{(1)}m_{0}u)\,dx\\
		&=0,
	\end{aligned}
\end{equation}
since $\mathcal{C}^2_{F_1}=\mathcal{C}^2_{F_2}$.

Now by Theorem \ref{thm:cgo}, there exist $\xi_i\in \mathbb{C}^n$ with $\xi_i\cdot\xi_i=0$
 such that 
\begin{equation}
	\begin{aligned}
	&\hat{m}_2=e^{\xi_1\cdot x-\frac{1}{2}v_0}(1+\omega_1(x;\xi_1))\\
	&u=e^{\xi_2\cdot x+\frac{1}{2}v_0}(1+\omega_2(x;\xi_2)),
	\end{aligned}
\end{equation}
and $\omega_j(x;\xi_j)$, $j=1,2$, satisfy the decay conditions
$$\lim\limits_{|\xi_i|\to\infty}\|\omega_j \|_{L^2(\Omega)}=0.$$
Let $k\in\mathbb{R}^n\backslash\{0\}$, then there exists $a,b\in\mathbb{R}^n,|a|=|b|=|k|$ such that $\{k,a,b\}$ is an orthogonal basis of $\mathbb{R}^n$. Now we choose
\begin{equation}\label{choose_xi}
	\begin{aligned}
		&\xi_1=\frac{i}{2}k+\left(\sqrt{R^2+\frac{1}{16}}+i\sqrt{R^2-\frac{1}{16}}\right)a+\left(\sqrt{R^2+\frac{1}{16}}-i\sqrt{R^2-\frac{1}{16}}\right)b\\
		&\xi_2=\frac{i}{2}k-\left(\sqrt{R^2+\frac{1}{16}}+i\sqrt{R^2-\frac{1}{16}}\right)a-\left(\sqrt{R^2+\frac{1}{16}}-i\sqrt{R^2-\frac{1}{16}}\right)b,
	\end{aligned}
\end{equation}
where $R\in\mathbb{R}.$
Then we have, for $j=1,2$,
$$\xi_j\cdot\xi_j=0,$$
$$\xi_j\cdot\overline{\xi_j}=\frac{1}{4}|k|^2+4R^2|k|^2.$$
By using this construction, \eqref{integral_by_part} and Lemma \ref{denseness} implies that
\begin{equation}
	\int_{\Omega} (F_1^{(1)}m_0-F_2^{(1)}m_0)e^{ik\cdot x}(1+\omega_1(x;\xi_1))(1+\omega_2(x;\xi_2))\,dx=0.
\end{equation}
Letting $R\to\infty$, we have $$ 	\int_{\Omega} (F_1^{(1)}m_0-F_2^{(1)}m_0)e^{ik\cdot x}\,dx=0.$$
Since $k\in\mathbb{R}^n\backslash\{0\}$ is an arbitrary vector, we have
 $F_1^{(1)}(x)m_0(x)-F_2^{(1)}(x)m_0(x)=0.$ and since $m_0(x)>0$ for all $x\in\Omega$ by construction in \eqref{m=e^{-v}}, we have
 $$F_1^{(1)}(x)=F^{(1)}_2(x),$$
 and we obtain the uniqueness of the first-order Taylor coefficients of $F$. 

 For the higher-order Taylor coefficients, we apply higher-order linearization and use the same method as in the proof of Theorem \ref{assumeF_1}, in the case of $\sigma=\kappa\equiv1$, which will be detailed in the next section. Consequently, we obtain $F_1(x,m)=F_2(x,m).$
\end{proof}

\section{Time-dependent MFG}\label{sec:TimeMFGProof}
\subsection{Unique continuation principle}

Next, we move on to prove the main result for the time-dependent MFG system in Theorem \ref{assumeF_1}. Before we do so, we first prove a unique continuation principle that is necessary for our proof. This result is an easy generalization of the result given in \cite{imanuvilov2023unique}, and we only give a sketch of the proof to highlight the main differences.

\begin{theorem}\label{UCP}

Let $\Gamma'\subset \Sigma$ be an arbitrarily chosen
non-empty relatively open sub-boundary. For $\sigma\in C^2(\bar{Q})$, $h\in L^\infty(Q)$, we assume that $(v_i,m_i) \in H^{2,1}(Q)\times H^{2,1}(Q)$ 
satisfy, for $i=1,2$, 
\begin{equation}\label{eq:UCPeq}
    \begin{cases}
        -\partial_t v_i(x,t) -\mathcal{L}_1 v_i(x,t) = E_i(x,t)+h(x,t)m_i(x,t) &\quad \text{in }Q,\\
        \partial_t m_i(x,t) -\mathcal{L}_2 m_i(x,t) + \mathcal{L}_3 v_i(x,t) = G_i(x,t)  &\quad \text{in }Q
    \end{cases}
\end{equation}
for some regular second-order elliptic operators $\mathcal{L}_1$, $\mathcal{L}_2$ and $\mathcal{L}_3$, such that the coefficients (depending on $x$ and $t$) of $\mathcal{L}_3$ are bounded,
and
\[\begin{cases}
v_i, \nabla v_i, \Delta v_i \in L^{\infty}(Q),\\
m_i, \nabla m_i \in L^{\infty}(Q), \quad 
\partial_t(v_1-v_2),\partial_t(m_1-m_2) \in L^2((\Sigma\backslash\Gamma')\times(0,T)).
\end{cases}\]
Then $v_1=v_2$, $\nabla v_1 = \nabla v_2$, $m_1=m_2$ and $\nabla m_1
= \nabla m_2$ on $\Gamma' \times (0,T)$ implies $v_1=v_2$ and $m_1=m_2$ 
in $Q$.

\end{theorem}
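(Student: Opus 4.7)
The plan is to reduce the statement to a unique continuation property for a coupled parabolic system with vanishing Cauchy data on $\Gamma'\times(0,T)$, and then to apply a weighted Carleman estimate in the spirit of \cite{imanuvilov2023unique}. First I would set $V:=v_1-v_2$ and $M:=m_1-m_2$. Interpreting the source terms $E_i$ and $G_i$ as being common to the two indices (which is the setting in which the lemma is invoked inside the inverse-problem argument, so that $E_1-E_2$ and $G_1-G_2$ drop out), the pair $(V,M)\in H^{2,1}(Q)\times H^{2,1}(Q)$ solves the homogeneous coupled system
\begin{equation*}
\begin{cases}
-\partial_t V - \mathcal{L}_1 V - h M = 0 & \text{in } Q,\\
\partial_t M - \mathcal{L}_2 M + \mathcal{L}_3 V = 0 & \text{in } Q,
\end{cases}
\end{equation*}
with Cauchy data $V=\partial_\nu V = M = \partial_\nu M = 0$ on $\Gamma'\times(0,T)$.

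The next step is an extension by zero of $(V,M)$ across $\Gamma'$ into a slightly larger cylinder $\widetilde{Q}=\widetilde{\Omega}\times(0,T)$, with $\widetilde{\Omega}\supset\Omega$ obtained by attaching a thin collar along $\Gamma'$. The vanishing Cauchy data guarantee that the extension belongs to $H^{2,1}(\widetilde{Q})$, while the hypothesis $\partial_t(v_1-v_2),\partial_t(m_1-m_2)\in L^2((\Sigma\setminus\Gamma')\times(0,T))$ is precisely what supplies enough regularity on the complementary part of the lateral boundary to treat the extended pair as a genuine $H^{2,1}$-solution of the same coupled system on $\widetilde{Q}$, once the coefficients of $\mathcal{L}_1,\mathcal{L}_2,\mathcal{L}_3$ and $h$ are smoothly extended. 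This converts the boundary Cauchy-data statement into a purely interior unique continuation question.

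The decisive step is a global Carleman estimate for the coupled system. I would choose a weight $\varphi(x,t)=e^{\lambda\psi(x)}/(t(T-t))^{\beta}$, where $\psi\in C^2(\overline{\widetilde{\Omega}})$ is pseudoconvex with respect to the principal symbols of $\mathcal{L}_1$ and $\mathcal{L}_2$ simultaneously, $|\nabla\psi|>0$ on $\overline{\widetilde{\Omega}}$, and $\psi$ attains its maximum deep inside $\Omega$. Applying the standard parabolic Carleman estimate separately to the backward operator $-\partial_t-\mathcal{L}_1$ acting on $V$ and to the forward operator $\partial_t-\mathcal{L}_2$ acting on $M$, and summing, one obtains a weighted lower bound for $V,M$ together with their first and second spatial derivatives in terms of the right-hand sides, which here reduce to the coupling terms $hM$ and $\mathcal{L}_3 V$. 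Choosing $s,\lambda$ sufficiently large, the zeroth-order coupling $|hM|^2$ is absorbed by the leading $s^3\lambda^4\varphi^3 e^{2s\varphi}|M|^2$ term on the left, and the \emph{second-order} coupling $|\mathcal{L}_3 V|^2$ is absorbed by the second-derivative contribution on the left of the Carleman estimate for $\mathcal{L}_1$. The resulting inequality forces $V\equiv M\equiv 0$ on a super-level set $\{\varphi>\mu\}$, and a sliding of the level sets of $\psi$ extends the vanishing to all of $Q$.

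The main technical obstacle will be constructing a \emph{single} weight $\varphi$ that is simultaneously adapted to two parabolic operators of opposite time direction while being strong enough to absorb the second-order, rather than merely zeroth-order, coupling $\mathcal{L}_3 V$. The opposite time-directions force $\varphi$ to be symmetric under $t\mapsto T-t$, which motivates the Fursikov--Imanuvilov factor $(t(T-t))^{-\beta}$ so that $\varphi$ blows up at $t=0$ and $t=T$ and makes the boundary terms at initial and terminal time vanish. The second-order coupling forces one to retain the full second-derivative term on the left of the Carleman estimate for $\mathcal{L}_1$, rather than the more common gradient-only version; pinning down this refinement is the only nontrivial part of the otherwise routine extension of \cite{imanuvilov2023unique} to the present coupled setting.
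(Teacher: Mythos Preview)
Your core strategy---reduce to a homogeneous coupled forward--backward parabolic system for $(V,M):=(v_1-v_2,m_1-m_2)$ with vanishing Cauchy data on $\Gamma'\times(0,T)$, then apply a Carleman estimate in the spirit of \cite{imanuvilov2023unique} and absorb the couplings $hM$ and $\mathcal{L}_3V$ for large $s$---matches the paper's proof exactly. Your reading that one must take $E_1=E_2$, $G_1=G_2$ for the conclusion to hold is also correct and is how the result is actually invoked later.

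The technical implementation, however, diverges from the paper and contains gaps. The paper does \emph{not} extend by zero across $\Gamma'$, nor use a global Fursikov--Imanuvilov weight. It works on $\Omega$ directly, localizes to a time slab $I=(t_0-\delta,t_0+\delta)$, and takes the weight $\varphi=e^{\lambda(d(x)-\beta(t-t_0)^2)}$ with $d\in C^2(\overline\Omega)$ satisfying $d>0$ in $\Omega$, $|\nabla d|>0$ on $\overline\Omega$, and $d=0$, $\nabla d\cdot\nu\le 0$ on $\Sigma\setminus\Gamma'$. The Carleman estimate (Lemma~1 of \cite{imanuvilov2023unique}) then carries explicit boundary terms on $(\Sigma\setminus\Gamma')\times I$, and \emph{this} is the role of the hypothesis $\partial_tV,\partial_tM\in L^2((\Sigma\setminus\Gamma')\times(0,T))$: it makes the integrand $|\nabla_{x,t}w|^2$ in those terms finite---it has nothing to do with an extension. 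Since $d=0$ on $\Sigma\setminus\Gamma'$, those boundary integrals carry weight $\le e^{2s}$; the slab ends $t=t_0\pm\delta$ carry weight $\le e^{2s\mu_2}$ with $\mu_2=e^{\lambda(d_1-\beta\delta^2)}$; and one chooses $\beta$ and $r\in(0,1)$ so that on $\Omega\times(t_0-r\delta,t_0+r\delta)$ the weight satisfies $\varphi\ge\mu_1>\max\{1,\mu_2\}$. Sending $s\to\infty$ forces $(V,M)=0$ on the small slab, and varying $t_0$ covers $Q$. By contrast, your extension scheme still leaves $\Sigma\setminus\Gamma'$ inside $\partial\widetilde\Omega$, where $(V,M)$ does not vanish; a global Carleman estimate on $\widetilde Q$ will produce lateral boundary terms there that you do not address. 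Placing the maximum of $\psi$ ``deep inside $\Omega$'' is also backwards---after extension the known-zero set is the collar, and the weight should dominate \emph{there} for zeros to propagate inward. Finally, a weight $\varphi$ that blows up at $t=0,T$ paired with $e^{2s\varphi}$ diverges on generic $H^{2,1}$ data; the Fursikov--Imanuvilov mechanism you have in mind uses $e^{2s\alpha}$ with $\alpha\to-\infty$, which is a different device and does not mesh with the need to keep the weight small on the uncontrolled lateral piece $\Sigma\setminus\Gamma'$.
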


\begin{proof}
    The proof follows similarly as in \cite{imanuvilov2023unique}. Indeed, for arbitrarily fixed $t_0\in (0,T)$ and $\delta>0$ such that 
$0 < t_0 - \delta \le t_0 + \delta < T$, consider $I = (t_0-\delta, t_0+\delta)$ and $Q_I = \Omega \times I.$ Set  
\[
P_k w(x,t):= \partial_tw + (-1)^k a(x,t)\Delta w + R(x,t,w),
\quad k=1,2,
\]
where $a \in C^{2}(\overline{Q_I})$, $>0$ on $\overline{Q_I}$, and 
\begin{equation}\label{UCPeq1}
| R(x,t,w)| \leq C_0(| w(x,t)|
+ |\nabla w(x,t)|), \quad (x,t)\in Q_I.
\end{equation}
Moreover, let
\begin{equation}\label{UCPeq2}
    \varphi(x,t) = e^{\lambda(d(x) - \beta (t-t_0)^2)},
\end{equation}
where $\lambda>0$ is a sufficiently large parameter, $\beta>0$ is arbitrarily given, and
$d \in C^2(\overline{\Omega})$ is such that 
\begin{equation}
d>0 \quad \text{in }\Omega, \quad | \nabla d| > 0 \quad 
\text{on $\overline{\Omega}$}, \quad d=0 \quad\text{on }\partial\Omega\setminus
\Gamma, \quad \nabla d\cdot \nu \le 0 \quad \text{on }\partial\Omega\setminus 
\Gamma,
\end{equation}  which is known to exist. Then the following Carleman estimate (Lemma 1 of \cite{imanuvilov2023unique}) is known to hold: 
For $k=1,2$, there exist constants $s_0>0$ and $C_1, C_2>0$ such that 
\begin{equation}\label{UCPeq4}
    \int_{Q_I} \left( \frac{1}{s}(| \partial_tw|^2
+ | \Delta w|^2) + s |\nabla w|^2 
+ s^3| w|^2 \right)e^{2s\varphi} dxdt
\le Cs^4\int_{Q_I} | P_kw|^2 e^{2s\varphi} dxdt
+ C\mathcal{B}(w),
\end{equation}
for all $s > s_0$ and $w\in H^{2,1}(Q_I)$ satisfying 
$w\in H^1(\partial\Omega\times I)$, where
\begin{multline*}
 \mathcal{B}(w) := e^{Cs}\Vert w\Vert^2_{H^1(\Gamma\times I)}
+ s^3\int_{(\partial\Omega\setminus \Gamma)\times I}
(|w|^2 + | \nabla_{x,t}w|^2) e^{2s} dSdt\\
+ s^2\int_{\Omega} (| w(x,t_0-\delta)|^2
+ | \nabla w(x,t_0-\delta)|^2
+ | w(x,t_0+\delta)|^2
+ | \nabla w(x,t_0+\delta)|^2) e^{2s\varphi(x,t_0-\delta)} dx.
\end{multline*}

Next, we expand \eqref{eq:UCPeq} in the form 
    \begin{equation}\label{UCPeq5}
    \begin{cases}
        -\partial_t v_i(x,t) -\alpha\Delta v_i(x,t) + R_1(x,t,v_i) = E_i(x,t)+h(x,t)m_i(x,t) &\quad \text{in }Q,\\
        \partial_t m_i(x,t) - \beta \Delta m_i(x,t) +R_3(x,t,m_i) = \gamma\Delta v_i(x,t) + R_2(x,t,v_i) + G_i(x,t)  &\quad \text{in }Q,
        %\partial_t m_i(x,t) - \beta \Delta( m_i(x,t))  -2\nabla \sigma(x,t) \cdot \nabla m_i(x,t)-m_i(x,t)\Delta \sigma(x,t)\\\quad = \beta\Delta v_i(x,t) + R_2(x,t,v_i) + G_i(x,t)  &\quad \text{in }Q,
    \end{cases}
\end{equation} for some bounded coefficients $\alpha(x,t)$, $\beta(x,t)$ and $\gamma(x,t)$, so that \[|R_1(x,t,v_i)|,|R_2(x,t,v_i)|\leq C\sum_{k=0}^1|\nabla^k v_i(x,t)|, \quad |R_3(x,t,m_i)|\leq C\sum_{k=0}^1|\nabla^k m_i(x,t)|\] 
for some different constants $C$. Then, taking the difference of the two systems for $i=1,2$, we can apply the Carleman estimate \eqref{UCPeq4}, and choose $s>0$ large enough to absorb terms on the right hand side of \eqref{UCPeq4}, to obtain the following Carleman estimate: 
\begin{multline}\label{UCPeq6}
 \int_{Q_I} \biggl( | \partial_t(v_1-v_2)|^2 
+ | (\Delta (v_1-v_2)|^2 
+ s^2|\nabla (v_1-v_2)|^2 + s^4| v_1-v_2|^2 
\\+ \frac{1}{s}(| \partial_t(m_1-m_2)|^2 
+ | \Delta (m_1-m_2)|^2) 
+ s|\nabla (m_1-m_2)|^2 + s^3|m_1-m_2|^2 \biggr)e^{2s\varphi} dxdt
\\\leq C_3\int_{Q_I} (s| F-\widetilde{F}|^2 + | G-\widetilde{G}|^2)e^{2s\varphi} dxdt\\+  C_4s(\mathcal{B}(v_1-v_2) + \mathcal{B}(m_1-m_2)) \quad \text{for all $s>s_0$}
\end{multline}
for some constants $s_0>0$ and $C_3,C_4>0$.

Next, we arbitrarily choose $t_0\in (0,T)$ and $\delta>0$ such that 
$0<t_0-\delta<t_0+\delta<T$.
We define 
\begin{equation}\label{UCPeq7}
d_0:= \min_{x\in \overline{\Omega}} d(x),\quad
d_1:= \max_{x\in \overline{\Omega}} d(x),\quad
0 < r < \left( \frac{d_0}{d_1}\right)^{\frac{1}{2}} < 1.
\end{equation}
We observe that to prove Theorem \ref{UCP}, it suffices to measure $v,m,\nabla v,\nabla m$ on 
$\Gamma \times (t_0-\delta,\, t_0+\delta)$ and prove the result in $\Omega \times (t_0-r\delta,\, t_0+r\delta)$. 
Indeed, since $t_0 \in (0,T)$ and $\delta>0$ can be arbitrarily chosen and the Carleman estimate is invariant with respect to $t_0$ provided that $0<t_0-\delta<t_0+\delta<T$, we can change $t_0$ over $(\delta, T-\delta)$ to obtain
$v_1=v_2$ and $m_1=m_2$ in $\Omega \times ((1-r)\delta,\, T-(1-r)\delta)$.
Since $\delta>0$ can be arbitrary, this means that 
$v_1=v_2$ and $m_1=m_2$ in $\Omega\times (0,T)$.

Consequently, we choose $\beta > 0$ in the weight of the Carleman estimate such that 
\[
\frac{d_1-d_0}{\delta^2 - r^2\delta^2} < \beta < \frac{d_0}{r^2\delta^2}.\]
Considering only the dominating terms in \eqref{UCPeq7} for large $s>0$, and observing that 
\[\varphi(x,t) = e^{\lambda(d(x) - \beta(t-t_0)^2)}
\geq e^{\lambda(d_0-\beta r^2\delta^2)}=:\mu_1\quad \text{in }
\Omega \times (t_0-r\delta,\, t_0+r\delta),\] 
we shrink the region of integration of the left hand side of \eqref{UCPeq7} to $\Omega \times (t_0-r\delta,\, t_0+r\delta)$ to obtain that 
\begin{multline}\label{UCPeq8}\norm{v_1-v_2}^2_{L^2(\Omega \times (t_0-r\delta,\, t_0+r\delta))}
+ \norm{m_1-m_2}^2_{L^2(\Omega \times (t_0-r\delta,\, t_0+r\delta))}
\\\leq C_5s^2M_1e^{-2s(\mu_1-1)} + C_6s^2M_2e^{-2s(\mu_1-\mu_2)}\end{multline}
for all large $s>0$ for some constants $C_5,C_6>0$, where $\mu_2:= e^{\lambda(d_1-\beta\delta^2)}$ and
\begin{align*}
& M_1:= \sum_{k=0}^1 
(\Vert\nabla^k_{x,t}(v_1-v_2)\Vert^2
_{L^2((\partial\Omega\setminus \Gamma)\times I)}
+ \Vert\nabla^k_{x,t}(m_1-m_2)\Vert^2
_{L^2((\partial\Omega\setminus \Gamma)\times I)}), \\
& M_2:= \sum_{k=0}^1 (\Vert (v_1-v_2)(\cdot,t_0 + (-1)^k\delta) \Vert^2
_{H^1(\Omega)} 
+ \Vert (m_1-m_2)(\cdot,t_0 + (-1)^k\delta) \Vert^2_{H^1(\Omega)}.
\end{align*}
Observing that 
$\mu_1 > \max\{ 1, \, \mu_2\}$, we take $s \to \infty$ in \eqref{UCPeq8} to obtain 
$v_1=v_2$ and $m_1=m_2$ in $\Omega \times (t_0-r\delta,\, t_0+r\delta)$.
Thus the proof is complete.
\end{proof}

\subsection{Proof of Theorem \ref{assumeF_1}}
With the unique continuation principle in hand, we are ready to give the proof of the main result in Theorem \ref{assumeF_1}. Before we begin, we first observe that in order to make use of the method of high order linearization in our proof, we linearize around the same stationary state $(v_0,m_0)$. In the stationary case, we have shown, in the beginning of the proof of Theorem \ref{zs_main_result}, that when $\mathcal{C}^2_{F_1}=\mathcal{C}^2_{F_2}$, it must hold that $(v_{0,1},m_{0,1})=(v_{0,2},m_{0,2})$. In the time-dependent case, we are unable to show this directly, because the time-dependent Cauchy problem requires more data compared to the stationary Cauchy problem. Instead, here, we take this result as an assumption. Then this assumption can be understood as follows: Given that the MFG system has stabilized to an unknown fixed stationary state $(v_0,m_0)$, we perturb it and measure the corresponding Cauchy data $\mathcal{C}^3_{F,G}$. In this sense, we can consider the linearization about the same state $(v_0,m_0)$, so that for $i=1,2$, we can consider the corresponding linearized systems \eqref{MFG2Linear1} and \eqref{MFGQuadraticLinear2}. 

With this consensus in mind, we can continue with the proof of Theorem \ref{assumeF_1}.

\begin{proof}
    The proof is based on the strong unique continuation principle in Theorem \ref{UCP}. Indeed, let $\tilde{v}=v_1^{(1)}-v_2^{(1)}$ and $\tilde{m}=m_1^{(1)}-m_2^{(1)}$. Then, $(\tilde{v},\tilde{m})\in [H^{2,1}(Q)]^2$ is the solution to the system 
    \begin{equation}\label{eq:MFG2Diff1}
    \begin{cases}
        -\partial_t \tilde{v}(x,t) -\sigma\Delta \tilde{v}(x,t) + \kappa\nabla v_0(x,t)\cdot \nabla \tilde{v}(x,t) = F^{(1)}(x)\tilde{m}(x,t) &\quad \text{in }Q,\\
        \partial_t \tilde{m}(x,t) -\Delta(\sigma \tilde{m}(x,t)) - c\nabla\cdot(\kappa \nabla \tilde{v}(x,t)) -\nabla \cdot (\kappa\tilde{m}(x,t)\nabla v_0(x,t)) = 0  &\quad \text{in }Q,\\
        \tilde{v}(x,T)=G^{(1)}_1(x)m^{(1)}_1(x,T)-G^{(1)}_2(x)m^{(1)}_2(x,T)&\quad \text{in }\Omega,\\
        \tilde{m}(x,0)=0 &\quad \text{in }\Omega,\\
        \tilde{v}=\nabla \tilde{v} = \tilde{m} = \nabla \tilde{m} = 0 &\quad \text{on }\Sigma,
    \end{cases}
    \end{equation} 
    since $F^{(1)}_1=F^{(1)}_2$. 
    Then, expanding $\Delta(\sigma \tilde{m})$ in the form $\sigma \Delta( m_i)  -2\nabla \sigma \cdot \nabla m_i-m_i\Delta \sigma$ and similarly for $\nabla \cdot (\tilde{m}\nabla v_0)$, by the unique continuation principle for general MFG systems as given in Theorem \ref{UCP}, 
    \[\tilde{v}=\tilde{m}\equiv0.\] In particular, \[v_1^{(1)}(x,T)=v_2^{(1)}(x,T) \quad \text{ and } \quad m_1^{(1)}(x,T)=m_2^{(1)}(x,T) \quad \text{in }\Omega\] for all solution pairs $(v_i,m_i)$. Picking a non-zero solution for $m_i$, $i=1,2$, we have that \[G^{(1)}_1(x)=G^{(1)}_2(x).\]

    We proceed to consider the second order linearization.
    By the previous step, we have that $(v_1^{(1)},m_1^{(1)})=(v_2^{(1)},m_2^{(1)})$, $(v_1^{(2)},m_1^{(2)})=(v_2^{(2)},m_2^{(2)})$ and $G^{(1)}_1(x)=G^{(1)}_2(x)$. Then, writing $\bar{v}=v^{(1,2)}_1-v^{(1,2)}_2$ and $\bar{m}=m^{(1,2)}_1-m^{(1,2)}_2$, we have
    \begin{equation}\label{eq:MFG2Diff2}
    \begin{cases}
        -\partial_t \bar{v}(x,t) -\sigma\Delta \bar{v}(x,t) + \kappa\nabla v_0(x,t) \cdot \nabla \bar{v}(x,t) 
        \\\qquad\qquad\qquad\qquad = F^{(1)}(x)\bar{m}(x,t) + [F^{(2)}_1(x,t) - F^{(2)}_2(x,t)]m^{(1)}(x,t)m^{(2)}(x,t) & \quad \text{in }Q,\\
        \partial_t \bar{m}(x,t) -\Delta(\sigma \bar{m}(x,t)) - c\nabla\cdot(\kappa \nabla \bar{v}(x,t)) - \nabla\cdot(\kappa\bar{m}(x,t) \nabla v_0(x,t)) = 0  &\quad \text{in }Q,\\
        \bar{v}(x,T)=G^{(1)}(x)\bar{m}(x,T)+[G^{(2)}_1(x)-G^{(2)}_2(x)]m^{(1)}(x,T)m^{(2)}(x,T)&\quad \text{in }\Omega,\\
        \bar{m}(x,0)=0 &\quad \text{in }\Omega,\\
        \bar{v}=\nabla \bar{v} = \bar{m} = \nabla \bar{m} = 0 &\quad \text{on }\Sigma.
    \end{cases}
    \end{equation}
    Since $m^{(1)},m^{(2)}$ have been uniquely obtained, we can view $E_i:=F^{(2)}_i(x,t)m^{(1)}(x,t)m^{(2)}(x,t)$ in \eqref{eq:UCPeq}, and once again apply the unique continuation principle Theorem \ref{UCP} to obtain $\bar{v}=\bar{m}\equiv0$. Substituting this into the first equation of \eqref{eq:MFG2Diff2}, we have that $E_1=E_2$. Choosing $m^{(1)},m^{(2)}\not\equiv0$, we obtain the result
    \[F^{(2)}_1(x,t)=F^{(2)}_2(x,t).\] In addition, 
    \[[G^{(2)}_1(x)-G^{(2)}_2(x)]m^{(1)}(x,T)m^{(2)}(x,T)=0,\] so choosing the same non-zero $m^{(1)},m^{(2)}$, we have that \[G^{(2)}_1(x)=G^{(2)}_2(x).\]

    Finally, by mathematical induction and repeating similar arguments as those in the first and second order linearization, one can show that $F^{(k)}_1(x,t)=F^{(k)}_2(x,t)$ and $G^{(k)}_1(x)=G^{(k)}_2(x)$ for all $k\in\mathbb{N}$. Hence we have the unique identifiability for the source functions and final values of $v$, i.e. $F_1=F_2$ and $G_1=G_2$.
    
    The proof is complete.
\end{proof}

\noindent\textbf{Acknowledgment.} 
	The work was supported by the Hong Kong RGC General Research Funds (No. 11311122, 11304224 and 11300821), the NSFC/RGC Joint Research Fund (No. N\_CityU101/21), and the ANR/RGC Joint Research Grant (No. A\_CityU203/19). We would also like to thank the handling editors and referees for their insightful comments.

\bibliographystyle{plain}
\bibliography{ref}

\begin{thebibliography}{10}

\bibitem{Achdou2021econs}
Yves Achdou, Jiequn Han, Jean-Michel Lasry, Pierre-Louis Lions, and Benjamin Moll.
\newblock Income and wealth distribution in macroeconomics: A continuous-time approach.
\newblock {\em The Review of Economic Studies}, 89(1):45--86, 04 2021.

\bibitem{MFGCrowd}
Yves Achdou and Jean-Michel Lasry.
\newblock Mean field games for modeling crowd motion.
\newblock In {\em Contributions to partial differential equations and applications}, volume~47 of {\em Comput. Methods Appl. Sci.}, pages 17--42. Springer, Cham, 2019.

\bibitem{MFGCrowd+Econs}
Yves Achdou and Mathieu Lauri\`ere.
\newblock Mean field games and applications: numerical aspects.
\newblock In {\em Mean field games}, volume 2281 of {\em Lecture Notes in Math.}, pages 249--307. Springer, Cham, [2020] \copyright 2020.

\bibitem{ABC2017StationaryMFG}
Nojood Almayouf, Elena Bachini, Andreia Chapouto, Rita Ferreira, Diogo Gomes, and et~al.
\newblock Existence of positive solutions for an approximation of stationary mean-field games.
\newblock {\em Involve}, 10(3):473--493, 2017.

\bibitem{Ambrose2022existence}
David~M. Ambrose.
\newblock Existence theory for non-separable mean field games in {S}obolev spaces.
\newblock {\em Indiana Univ. Math. J.}, 71(2):611--647, 2022.

\bibitem{MFGSocialNetwork}
D.~Bauso, H.~Tembine, and T.~Ba\c{s}ar.
\newblock Opinion dynamics in social networks through mean-field games.
\newblock {\em SIAM J. Control Optim.}, 54(6):3225--3257, 2016.

\bibitem{MFG-stationary}
P.~Cardaliaguet, J.-M. Lasry, P.-L. Lions, and A.~Porretta.
\newblock Long time average of mean field games with a nonlocal coupling.
\newblock {\em SIAM J. Control Optim.}, 51(5):3558--3591, 2013.

\bibitem{cardaliaguet2010notes}
Pierre Cardaliaguet.
\newblock Notes on mean field games.
\newblock Technical report, based on the lectures by P.L. Lions at Coll\`ege de France, 2010.

\bibitem{cardaliaguet2015weak}
Pierre Cardaliaguet.
\newblock Weak solutions for first order mean field games with local coupling.
\newblock In {\em Analysis and geometry in control theory and its applications}, pages 111--158. Springer, 2015.

\bibitem{cardaliaguet2019master}
Pierre Cardaliaguet, Fran{\c{c}}ois Delarue, Jean-Michel Lasry, and Pierre-Louis Lions.
\newblock {\em The master equation and the convergence problem in mean field games}.
\newblock Princeton University Press, 2019.

\bibitem{MFG-stationary2}
Pierre Cardaliaguet, Jean-Michel Lasry, Pierre-Louis Lions, and Alessio Porretta.
\newblock Long time average of mean field games.
\newblock {\em Netw. Heterog. Media}, 7(2):279--301, 2012.

\bibitem{CarmonaDelarue2018_1}
Ren\'{e} Carmona and Fran\c{c}ois Delarue.
\newblock {\em Probabilistic theory of mean field games with applications. {I}}, volume~83 of {\em Probability Theory and Stochastic Modelling}.
\newblock Springer, Cham, 2018.
\newblock Mean field FBSDEs, control, and games.

\bibitem{MFGCar2}
Yu-Fung Chien and Ding-Wei Huang.
\newblock Car-oriented mean-field theory revisited.
\newblock {\em Modern Physics Letters B}, 18(17):887--894, 2004.

\bibitem{chow2022numerical}
Yat~Tin Chow, Samy~Wu Fung, Siting Liu, Levon Nurbekyan, and Stanley Osher.
\newblock A numerical algorithm for inverse problem from partial boundary measurement arising from mean field game problem.
\newblock {\em Inverse Problems}, 39(1):Paper No. 014001, 21, 2023.

\bibitem{cirant2020short}
Marco Cirant, Roberto Gianni, and Paola Mannucci.
\newblock Short-time existence for a general backward--forward parabolic system arising from mean-field games.
\newblock {\em Dyn. Games Appl.}, 10(1):100--119, 2020.

\bibitem{ding2022mean}
Lisang Ding, Wuchen Li, Stanley Osher, and Wotao Yin.
\newblock A mean field game inverse problem.
\newblock {\em J. Sci. Comput.}, 92(1):1--35, 2022.

\bibitem{ding2023determining}
Ming-Hui Ding, Hongyu Liu, and Guang-Hui Zheng.
\newblock Determining a stationary mean field game system from full/partial boundary measurement.
\newblock {\em arXiv: 2308.06688}, 2023.

\bibitem{FerreiraGomesTada2019StationaryMFGDirichlet}
Rita Ferreira, Diogo Gomes, and Teruo Tada.
\newblock Existence of weak solutions to first-order stationary mean-field games with {D}irichlet conditions.
\newblock {\em Proc. Amer. Math. Soc.}, 147(11):4713--4731, 2019.

\bibitem{ferreira2021existence}
Rita Ferreira, Diogo Gomes, and Teruo Tada.
\newblock Existence of weak solutions to time-dependent mean-field games.
\newblock {\em Nonlinear Anal.}, 212:112470, 2021.

\bibitem{MFGAutoCar2}
Kuang Huang, Xuan Di, Qiang Du, and Xi~Chen.
\newblock A game-theoretic framework for autonomous vehicles velocity control: bridging microscopic differential games and macroscopic mean field games.
\newblock {\em Discrete Contin. Dyn. Syst. Ser. B}, 25(12):4869--4903, 2020.

\bibitem{huang2006large}
Minyi Huang, Roland~P Malham{\'e}, and Peter~E Caines.
\newblock Large population stochastic dynamic games: closed-loop {M}ckean-{V}lasov systems and the {N}ash certainty equivalence principle.
\newblock {\em Commun. Inf. Syst.}, 6(3):221--252, 2006.

\bibitem{imanuvilov2023unique}
Oleg Imanuvilov, Hongyu Liu, and Masahiro Yamamoto.
\newblock Unique continuation for a mean field game system.
\newblock {\em Appl. Math. Lett.}, 145:Paper No. 108757, 6, 2023.

\bibitem{imanuvilov2023lipschitz1}
Oleg Imanuvilov, Hongyu Liu, and Masahiro Yamamoto.
\newblock Lipschitz stability for determination of states and inverse source problem for the mean field game equations.
\newblock {\em Inverse Probl. Imaging}, 18(4):824--859, 2024.

\bibitem{imanuvilov2023global}
Oleg Imanuvilov and Masahiro Yamamoto.
\newblock Global {L}ipschitz stability for an inverse coefficient problem for a mean field game system.
\newblock {\em arXiv: 2307.04025}, 2023.

\bibitem{klibanov2023coefficient2}
Michael~V. Klibanov.
\newblock A coefficient inverse problem for the mean field games system.
\newblock {\em Appl. Math. Optim.}, 88(2):Paper No. 54, 28, 2023.

\bibitem{klibanov2023mean1}
Michael~V. Klibanov.
\newblock The mean field games system: {C}arleman estimates, {L}ipschitz stability and uniqueness.
\newblock {\em J. Inverse Ill-Posed Probl.}, 31(3):455--466, 2023.

\bibitem{klibanov2023lipschitz}
Michael~V. Klibanov and Yurii Averboukh.
\newblock Lipschitz stability estimate and uniqueness in the retrospective analysis for the mean field games system via two {C}arleman estimates.
\newblock {\em SIAM J. Math. Anal.}, 56(1):616--636, 2024.

\bibitem{klibanov2023coefficient1}
Michael~V. Klibanov, Jingzhi Li, and Hongyu Liu.
\newblock Coefficient inverse problems for a generalized mean field games system with the final overdetermination.
\newblock {\em arXiv: 2305.01065}, 2023.

\bibitem{klibanov2023holder}
Michael~V. Klibanov, Jingzhi Li, and Hongyu Liu.
\newblock H\"older stability and uniqueness for the mean field games system via {C}arleman estimates.
\newblock {\em Stud. Appl. Math.}, 151(4):1447--1470, 2023.

\bibitem{klibanov2023mean2}
Michael~V. Klibanov, Jingzhi Li, and Hongyu Liu.
\newblock On the mean field games system with lateral {C}auchy data via {C}arleman estimates.
\newblock {\em J. Inverse Ill-Posed Probl.}, 32(2):277--295, 2024.

\bibitem{klibanov2023convexification}
Michael~V. Klibanov, Jingzhi Li, and Zhipeng Yang.
\newblock Convexification numerical method for the retrospective problem of mean field games.
\newblock {\em Appl. Math. Optim.}, 90(1):Paper No. 6, 24, 2024.

\bibitem{Lacker2019finance}
Daniel Lacker and Thaleia Zariphopoulou.
\newblock Mean field and n-agent games for optimal investment under relative performance criteria.
\newblock {\em Mathematical Finance}, 29(4):1003--1038, 2019.

\bibitem{LasryLions1}
Jean-Michel Lasry and Pierre-Louis Lions.
\newblock Jeux \`a champ moyen. {I}. {L}e cas stationnaire.
\newblock {\em C. R. Math. Acad. Sci. Paris}, 343(9):619--625, 2006.

\bibitem{LasryLions2}
Jean-Michel Lasry and Pierre-Louis Lions.
\newblock Jeux {\`a} champ moyen. {II}--{H}orizon fini et contr{\^o}le optimal.
\newblock {\em C. R. Math. Acad. Sci. Paris}, 343(10):679--684, 2006.

\bibitem{MFG-book}
Jean-Michel Lasry and Pierre-Louis Lions.
\newblock Mean field games.
\newblock {\em Jpn. J. Math.}, 2(1):229--260, 2007.

\bibitem{li2023inverse}
Yuhan Li, Hongyu Liu, and Catharine W.~K. Lo.
\newblock On inverse problems in predator-prey models.
\newblock {\em J. Differential Equations}, 397:349--376, 2024.

\bibitem{li2024inverse}
Yuhan Li, Hongyu Liu, and Catharine W.~K. Lo.
\newblock On inverse problems in multi-population aggregation models.
\newblock {\em J. Differential Equations}, 414:94--124, 2025.

\bibitem{LiLoCAC2024}
Yuhan Li and Catharine W.~K. Lo.
\newblock On the simultaneous recovery of environmental factors in the {3D} chemotaxis-{N}avier-{S}tokes models.
\newblock {\em Communications on Analysis and Computation}, 2(1):30--47, 2024.

\bibitem{LinLiuLiuZhang2021-InversePbSemilinearParabolic-CGOSolnsSuccessiveLinearisation}
Yi-Hsuan Lin, Hongyu Liu, Xu~Liu, and Shen Zhang.
\newblock Simultaneous recoveries for semilinear parabolic systems.
\newblock {\em Inverse Problems}, 38(11):Paper No. 115006, 39, 2022.

\bibitem{liu2023determining}
Hongyu Liu and Catharine W.~K. Lo.
\newblock Determining a parabolic system by boundary observation of its non-negative solutions with biological applications.
\newblock {\em Inverse Problems}, 40(2):Paper No. 025009, 24, 2024.

\bibitem{LiuMouZhang2022InversePbMeanFieldGames}
Hongyu Liu, Chenchen Mou, and Shen Zhang.
\newblock Inverse problems for mean field games.
\newblock {\em Inverse Problems}, 39(8):Paper No. 085003, 29, 2023.

\bibitem{liu2023stability}
Hongyu Liu and Masahiro Yamamoto.
\newblock Stability in determination of states for the mean field game equations.
\newblock {\em arXiv: 2304.05896}, 2023.

\bibitem{LiuZhang2022-InversePbMFG}
Hongyu Liu and Shen Zhang.
\newblock On an inverse boundary problem for mean field games.
\newblock {\em arXiv: 2212.09110}, 2022.

\bibitem{LiuZhangMFG4}
Hongyu Liu and Shen Zhang.
\newblock Inverse boundary problem for a mean field game system with probability density constraint.
\newblock {\em arXiv: 2402.13274}, 2024.

\bibitem{LiuZhangMFG3}
Hongyu Liu and Shen Zhang.
\newblock Simultaneously recovering running cost and {H}amiltonian in mean field games system.
\newblock {\em in press}, 2024.

\bibitem{Linear_in_Tn}
Jean-Michel~Lasry Pierre~Cardaliaguet, François~Delarue and Pierre~Louis Lions.
\newblock The master equation and the convergence problem in mean field games.
\newblock {\em Princeton University Press.}, , 381, 2019, AMS-201, 9780691190709. ffhal-01196045.

\bibitem{ren2023unique}
Kui Ren, Nathan Soedjak, and Kewei Wang.
\newblock Unique determination of cost functions in a multipopulation mean field game model.
\newblock {\em arXiv: 2312.01622}, 2023.

\bibitem{ren2024reconstructing}
Kui Ren, Nathan Soedjak, Kewei Wang, and Hongyu Zhai.
\newblock Reconstructing a state-independent cost function in a mean-field game model.
\newblock {\em arXiv: 2402.09297}, 2024.

\bibitem{ricciardi2022master}
Michele Ricciardi.
\newblock The master equation in a bounded domain with {N}eumann conditions.
\newblock {\em Commun. Partial. Differ. Equ.}, 47(5):912--947, 2022.

\bibitem{linear_in_Om}
Michele Ricciardi.
\newblock The master equation in a bounded domain with neumann conditions,.
\newblock {\em Communications in Partial Differential Equations}, (2022) , 47:5, 912-947, DOI: 10.1080/03605302.2021.2008965.

\bibitem{sun_cgo}
Ziqi Sun.
\newblock An inverse boundary value problem for {S}chrödinger operators with vector potentials.
\newblock {\em Transactions of the American Mathematical Society, Vol. 338, No. 2}, Aug. 1993.

\end{thebibliography}
\end{document}